\tikzset{thickstyle/.style={shape=circle,fill=black,scale=0.3}}
\tikzset{thinstyle/.style={shape=circle,fill=black,scale=0.15}}
\theoremstyle{plain}
\newtheorem{thm}{Theorem}[section] 
\newtheorem{pro}[thm]{Proposition}
\newtheorem{lem}[thm]{Lemma}
\newtheorem*{conA}{Conjecture A}
\newtheorem*{conB}{Conjecture B}
\theoremstyle{definition}
\newtheorem{dfn}[thm]{Definition}
\newtheorem{rem}[thm]{Remark}
\newtheorem{exa}[thm]{Example}
\newcommand{\lan}{\langle}
\newcommand{\ran}{\rangle}
\newcommand{\hull}[1]{\mathrm{conv}\left( #1 \right)}
\newcommand{\vectortwo}[2]{{\begin{pmatrix} #1 \\ #2 \end{pmatrix}}}
\newcommand{\vectorthree}[3]{{\begin{pmatrix} #1 \\ #2 \\ #3 \end{pmatrix}}}
\newcommand{\vectorfour}[4]{{\begin{pmatrix} #1 \\ #2 \\ #3 \\ #4\end{pmatrix}}}
\newcommand{\seeds}{\mathscr{S}}
\newcommand{\usp}{u} 
\newcommand{\divV}{\partial V} 
\newcommand{\moduli}[1]{\mathscr{M}_{#1}}
\DeclareMathOperator{\coker}{coker}
\DeclareMathOperator{\divi}{div}
\DeclareMathOperator{\Defo}{Def}
\DeclareMathOperator{\Hom}{Hom}
\DeclareMathOperator{\Ker}{Ker}
\DeclareMathOperator{\Newt}{Newt}
\DeclareMathOperator{\mut}{mut}
\DeclareMathOperator{\Ext}{Ext}
\DeclareMathOperator{\NE}{NE}
\DeclareMathOperator{\Aff}{Aff} 
\DeclareMathOperator{\Spec}{Spec}
\DeclareMathOperator{\Gm}{\mathbb{G}_m}
\newcommand{\degs}{J}
\newcommand{\into}{\hookrightarrow}
\newcommand{\Wb}{W^\flat}
\newcommand{\Db}{D^\flat}
\newcommand{\Bbar}{\overline{B}}
\renewcommand{\Dbar}{\overline{D}}
\newcommand{\Wbar}{\overline{W}}
\newcommand{\Xbar}{\overline{X}}
\renewcommand{\AA}{\mathbb{A}}
\newcommand{\RR}{\mathbb{R}}
\newcommand{\PP}{\mathbb{P}}
\newcommand{\ZZ}{\mathbb{Z}}
\newcommand{\NN}{\mathbb{N}}
\newcommand{\CC}{\mathbb{C}}
\newcommand{\LL}{\mathbb{L}}
\newcommand{\TT}{\mathbb{T}}
\newcommand{\cD}{\mathscr{D}}
\newcommand{\cL}{\mathcal{L}}
\newcommand{\cO}{\mathcal{O}}
\newcommand{\cU}{\mathscr{U}}
\newcommand{\cM}{\mathscr{M}}
\newcommand{\rmi}{\mathrm{i}}
\newcommand{\rmd}{\mathrm{d}}
\title[Mirror Symmetry and Smoothing Gorenstein toric affine \mbox{$3$-folds}]{Mirror Symmetry and Smoothing Gorenstein toric affine \mbox{$3$-folds}}
\author{Alessio Corti}
\address{Department of Mathematics,
 Imperial College London,
180 Queen's Gate, SW7~2AZ
 London, United Kingdom}
\email{a.corti@imperial.ac.uk}
\author{Matej Filip}
\address{Department of Mathematics,
 University of Ljubljana,
 Trzaska cesta 25, Ljubljana, Slovenia}
\email{matej.filip@fe.uni-lj.si}
\author{Andrea Petracci}
\address{Institut f\"ur Mathematik,
Freie Universit\"at Berlin, Arnimallee 3, 14195 Berlin, Germany}
\email{andrea.petracci@fu-berlin.de}
\begin{document}

\begin{abstract}
  We state two conjectures that together allow one to describe the set
  of smoothing components of a Gorenstein toric affine \mbox{$3$-fold}
  in terms of a combinatorially defined and easily studied set of
  Laurent polynomials called \emph{$0$-mutable polynomials}.  We
  explain the origin of the conjectures in mirror symmetry and present
  some of the evidence.
\end{abstract}

\maketitle

\tableofcontents

\section{Introduction}
\label{sec:introduction}

We explore mirror symmetry for smoothings of a $3$-dimensional
Gorenstein toric affine variety $V$. Specifically, we try to imagine
what consequences mirror symmetry may have for the classification of
smoothing components of the deformation space $\Defo V$. Conjecture~A
makes the surprising statement that the set of smoothing components of
$\Defo V$ is in bijective correspondence with a set of easily defined
and enumerated 2-variable Laurent polynomials, called $0$-mutable
polynomials. Our Conjecture~B --- in the strong form stated in
Remark~\ref{rem:conjBStrongForm} --- asserts that these smoothing
components are themselves smooth, and computes their tangent spaces
from the corresponding $0$-mutable polynomials.

\medskip

As is customary in toric geometry, $V$ is associated to a strictly
convex 3-dimensional rational polyhedral cone
$\sigma \subseteq N_\RR$, where $N$ is a $3$-dimensional lattice; the
Gorenstein condition means that the integral generators of the rays of
$\sigma$ all lie on an integral affine hyperplane $(\usp = 1)$ for
some $\usp \in M := \Hom_\ZZ(N, \ZZ)$.  We denote by $F$ the convex
hull of the integral generators of the rays of $\sigma$, i.e.\
\[
F := \sigma \cap (\usp=1);
\]
this is a lattice polygon (i.e.\ a lattice polytope of dimension $2$)
embedded in the affine $2$-dimensional lattice $(u=1)$.  The
isomorphism class of the toric variety $V$ depends only on the
affine equivalence class of the polygon $F$.

If $V$ has an isolated singularity, then $\Defo V$ is finite dimensional and
we know from the work of
Altmann~\cite{altmann_versal_deformation} that there is a $1$-to-$1$
correspondence between the set of irreducible components of $\Defo V$
and integral maximal Minkowski decompositions of $F$.
Altmann also shows that, when taken with their
\emph{reduced} structure, these components are all themselves smooth. 

We are interested in the case when $V$ has non-isolated
singularities. Very little is known at this level of generality, but
examples show that the picture for non-isolated singularities is very
different from the one just sketched for isolated singularities.  Our
main reason for wanting to work with non-isolated singularities is the
Fanosearch project: we wish to prove a general criterion for smoothing
a toric Fano variety, and Conjecture~A here is just the local case.

Conjecture~A characterizes \emph{smoothing components} of $\Defo V$ in
terms of the combinatorics of the polygon $F$. Specifically, we define
the set $\mathfrak{B}$ of $0$-mutable Laurent polynomials with Newton polygon $F$,
and the conjecture states that there is a canonical bijective
correspondence $\kappa \colon \mathfrak{B}\to \mathfrak{A}$, where
$\mathfrak{A}$ is the set of smoothing components of $\Defo V$.

At first sight the formulation of the conjecture seems strange;
however, the statement makes sense in the context of mirror symmetry,
where (conjecturally) the $0$-mutable polynomials are the mirrors of
the corresponding smoothing components.

In Section~\ref{sec:intr-mirr-symm}, we state a new
Conjecture~B,\footnote{The statement of Conjecture~B comes after
	Sec.~\ref{sec:properties-0-mutable} but does not logically depend on
	it: if you wish, you can skip directly from
	Sec.~\ref{sec:def-of-0-mutable} to Sec.~\ref{sec:conjectures}.}
which implies the existence of a map
$\kappa \colon \mathfrak{B}\to \mathfrak{A}$ --- see
Remark~\ref{rem:conjBimpliesmapdefined}. In that section, we also explain how to
(conjecturally) construct a deformation directly from a $0$-mutable
polynomial in the spirit of the intrinsic mirror symmetry of
Gross--Siebert~\cite{1909.07649, gs_intrinsic_slc} and work of
Gross--Hacking--Keel~\cite{MR3415066}.

The coefficients of the
$0$-mutable Laurent polynomials that appear in our conjecture ought
themselves to enumerate certain holomorphic discs in the corresponding
smoothing, and we would love to see a precise statement along these
lines.

In our view, the conjectures together are nothing other than a
statement of mirror symmetry as a one-to-one correspondence between
two sets of objects, similar to the conjectures made in
\cite{MR3430830} in the context of orbifold del Pezzo surfaces, and
the correspondence between Fano 3-folds and Minkowski polynomials
discovered in \cite{MR3470714}.

In Section~\ref{sec:properties-0-mutable} we give some equivalent
characterisations of $0$-mutable polynomials and begin to sketch some
of their general properties. These properties make it very easy to
enumerate the $0$-mutable polynomials with given Newton polygon.
The material here is rather sketchy --- full details will appear elsewhere;
it serves for context, but it is not logically necessary for the statement
of the conjectures.

The suggestion that there is a simple structure to the set of
smoothing components is surprising in a subject that --- as all serious
practitioners know --- is marred by Murphy's law. In fact, there is a
substantial body of direct and circumstantial evidence for the
conjectures, some of which we present in
Section~\ref{sec:evidence-1}.

In the final Section~\ref{sec:evidence-matej} we compute in detail the
deformation space of the variety $V_F$ associated to the polygon $F$ of
Example~\ref{ex:quadrilateral}, giving evidence for the
conjectures. Some of the reasons for choosing this particular example
are:
\begin{enumerate}
	\item The variety $V_F$ is of codimension $5$ and hence it lies outside the
	--- still rudimentary but very useful --- structure theory of codimension-$4$
	Gorenstein rings \cite{MR3380790}, see also \cite{1812.02594, tom_and_jerry};
	\item For this reason, $V_F$ is a good test of the technology of
	\cite{fil, fil2} as a tool for possibly proving the conjectures;
	\item The polygon $F$ appears as a facet of some of the
	$3$-dimensional reflexive polytopes and hence it is immediately
	relevant for the Fanosearch project.
\end{enumerate}
As things stand, we are some distance away from being able to prove
the conjectures. We had a tough time even with the example of
Section~\ref{sec:evidence-matej}: while a treatment based on
\cite{fil, fil2} seems possible, the task became so tedious that we
decided instead to rely on Ilten's Macaulay2 \cite{M2} package
\emph{Versal deformations and local Hilbert schemes}
\cite{ilten_package_article}. That package makes it possible to test
the conjectures in many other examples in codimension $\geq 5$.

\subsection*{Notation and conventions}
We work over $\CC$, but everything holds over an algebraically closed field of characteristic zero.
We refer the reader to \cite{fulton_book} for an introduction to toric geometry.
All the toric varieties we consider are normal.
We use the following notation.

\begin{longtable}{lp{0.85\textwidth}}
	$F$ & a lattice polygon \\
	$V$ & the Gorenstein toric affine \mbox{$3$-fold} associated to the cone over $F$ put at height 1\\
	$\divV$ & the toric boundary of $V$ \\
	$\Xbar$ & the projective toric surface associated to the normal fan of $F$\\
	$\Bbar$ & the toric boundary of $\Xbar$ \\
	$A$ & the ample line bundle on $\Xbar$ given by $F$ \\
	$X$ & the cluster surface associated to $F$ (the non-toric blowup of
	$\Xbar$ constructed in Section~\ref{sec:properties-0-mutable})\\
	$B$ & the strict transform of $\Bbar$ in $X$ \\
	$\Wb$ & the toric \mbox{$3$-fold} constructed in Section~\ref{sec:intr-mirr-symm}\\
	$\Wbar$ & the toric blowup of $\Wb$ constructed in Section~\ref{sec:intr-mirr-symm}\\
	$W$ & the mirror cluster variety (the non-toric blowup of $\Wbar$
	constructed in Section~\ref{sec:intr-mirr-symm})
\end{longtable}

\subsection*{Thanks to Bill from AC} I was lucky to be a L.E.\ Dickson
Instructor at the University of Chicago in the years 1993--1996, where
I worked in the group led by Bill Fulton. I had studied deformations
of singularities in a seminar held in 1988--89 at the University of
Utah, but in Bill's seminar I learnt about Chow groups and quantum
cohomology. It was a wonderful time in my work life thanks largely to
Bill. This paper features many of the ideas that I learnt in Bill's
seminar and it is very nice to see that they are so relevant in
the study of deformations of singularities.

\subsection*{Acknowledgements} We thank Klaus Altmann, Tom Coates,
Mark Gross, Paul Hacking, Al Kasprzyk, Giuseppe Pitton, and Thomas
Prince for many helpful conversations.

We particularly thank Al Kasprzyk for sharing with us and allowing us
to present some of his unpublished ideas on maximally mutable Laurent
polynomials \cite{KT}.

We discussed with Mark Gross some of our early experiments with
$0$-mutable polynomials.

We owe special thanks to Paul Hacking who read and corrected various
mistakes in earlier versions of the paper --- the responsibility for
the mistakes that are left is of course ours.

Giuseppe Pitton ran computer calculations that provide indirect
evidence for the conjectures.

The idea that mirror Laurent polynomials are characterised by their
mutability goes back to Sergey Galkin \cite{galkin_usnich}.

It will be clear to all those familiar with the issues that this paper
owes a very significant intellectual debt to the work of Gross,
Hacking and Keel \cite{MR3415066} and the intrinsic mirror symmetry of
Gross and Siebert \cite{1909.07649, gs_intrinsic_slc}.

Last but not least, it is a pleasure to thank the anonymous referee
who read our manuscript very carefully, and told us about Ilten's
Macaulay2 package.

\section{Conjecture~A}
\label{sec:main}

\subsection{Gorenstein toric affine varieties}

Consider a rank-$n$ lattice $N\simeq \ZZ^n$ (usually $n=3$) and, as
usual in toric geometry, its dual lattice $M=\Hom_\ZZ (N,\ZZ)$.  The
$n$-dimensional torus
\[
\TT= \Spec \CC [M]
\]
is referred to simply as ``the'' torus.\footnote{Sometimes we denote
	this torus by $\TT_N$, that is, the commutative group scheme
	$N\otimes_\ZZ \Gm$ such that for all rings $R$
	$\TT_N(R)=N\otimes_{\ZZ} R^\times$.} Consider a strictly convex
full-dimensional rational polyhedral cone $\sigma \subset N_\RR$ and
the corresponding affine toric variety
\[
V = \Spec \CC[ \sigma^\vee \cap M].
\]
This is a normal Cohen--Macaulay $n$-dimensional variety.

By definition $V$ is Gorenstein if and only if the pre-dualising sheaf 
\[
\omega^0_V = \mathcal{H}^{-n} (\omega_V^\bullet)
\]
is a line bundle. Since our $V$ is Cohen--Macaulay, this is the same as insisting
that all the local rings of $V$ are local Gorenstein rings.
It is known and not difficult to show that $V$ is Gorenstein if and only if there is a
vector $\usp \in M$ such that the integral generators
$\rho_1, \dots, \rho_m$ of the rays of the cone $\sigma$ all lie on the affine
lattice $\LL=(\usp = 1)\subset N$. Such vector $\usp$ is called the \emph{Gorenstein degree}.

If $V$ is Gorenstein, then the toric boundary of $V$ is the following effective reduced
Cartier divisor on $V$:
\[
\divV = \Spec \CC[\sigma^\vee \cap M]/(x^\usp).
\]

If $V$ is Gorenstein, we set
\[
F := \sigma \cap (\usp=1);
\]
this is an $(n-1)$-dimensional lattice polytope embedded in the affine
lattice $\LL = (u=1)$ and it is the convex hull of the integral
generators of the rays of the cone $\sigma$.  One can prove that the
isomorphism class of $V$ depends only on the affine equivalence class
of the lattice polytope $F$ in the affine lattice $\LL$.  Therefore we
will say that $V$ is associated to the polytope $F$.

This establishes a $1$-to-$1$ correspondence between isomorphism
classes of Gorenstein toric affine \mbox{$n$-folds} without torus
factors and $(n-1)$-dimensional lattice polytopes up to affine
equivalence.

\subsection{Statement of Conjecture~A}
\label{sec:statement-A}

In this section we explain everything that is needed to make sense of
the following:

\begin{conA}
	Consider a lattice polygon $F$ in a $2$-dimensional affine lattice $\LL$.
	Let $V$ be the Gorenstein toric affine \mbox{$3$-fold} associated to $F$.
	
	Then there is a canonical bijective function $\kappa \colon \mathfrak{B}\to \mathfrak{A}$ where:
	\begin{itemize}
		\item $\mathfrak{A}$ is the set of smoothing components of the miniversal deformation space
		$\Defo V$,
		\item $\mathfrak{B}$ is the set of $0$-mutable polynomials $f\in \CC[\LL]$ with Newton
		polygon $F$.
	\end{itemize}
\end{conA}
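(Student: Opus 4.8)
The plan is to construct the bijection $\kappa$ explicitly in the direction $\mathfrak{B}\to\mathfrak{A}$ and then establish that it is well-defined, injective, and surjective. Because $V$ is toric, the torus $\TT$ acts on the miniversal deformation $\Defo V$ and induces an $M$-grading on the first-order deformation space $T^1_V$ and on the obstruction space $T^2_V$; following Altmann \cite{altmann_versal_deformation}, each graded piece $T^1_V(-R)$ admits a purely combinatorial description in terms of $F$ and a degree $R\in M$. The first step is to set up this graded deformation theory carefully in the non-isolated case. Here the subtlety, absent from Altmann's isolated setting, is that $T^1_V$ is no longer finite dimensional, so the infinitesimal analysis must be organized degree by degree rather than all at once.

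For the construction of $\kappa$ I would follow the mirror-symmetric recipe of Section~\ref{sec:intr-mirr-symm}: given a $0$-mutable polynomial $f$ with Newton polygon $F$, build the cluster surface $X$ together with the log Calabi--Yau pair $(X,B)$, and feed $(X,B)$ into the intrinsic mirror construction of Gross--Hacking--Keel \cite{MR3415066} and Gross--Siebert \cite{1909.07649, gs_intrinsic_slc}. The aim is to produce a flat family over a base that contains the trivial deformation of $V$ as a special point, hence a map to $\Defo V$ whose image determines a component $\kappa(f)\in\mathfrak{A}$. The essential checks at this stage are that the general fiber is smooth---so that $\kappa(f)$ is genuinely a \emph{smoothing} component---and that the output depends only on the mutation data packaged by $f$ and not on auxiliary choices. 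Making this map well-defined is already nontrivial and is exactly the content of Remark~\ref{rem:conjBimpliesmapdefined}, where it is seen to follow from Conjecture~B.

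Injectivity I would attack by exhibiting $f$ as an \emph{invariant} of the component $\kappa(f)$: the coefficients of $f$ ought to be recoverable from counts of holomorphic discs, equivalently from the canonical scattering and theta-function structure on the mirror attached to the general fiber, so that two polynomials producing the same component must coincide. In the isolated case this should reduce, via the dictionary between $0$-mutable polynomials and integral maximal Minkowski decompositions of $F$, to Altmann's correspondence, providing a consistency check on the whole scheme. Matching dimensions---comparing the combinatorially computed tangent space to $\kappa(f)$ (this is precisely Conjecture~B) with the expected dimension read off from $f$---would simultaneously support injectivity and pin down the scheme structure of each component.

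The main obstacle is surjectivity: showing that \emph{every} smoothing component of $\Defo V$ arises from a $0$-mutable polynomial. For non-isolated singularities almost nothing is known about the global geometry of $\Defo V$, and there is no a priori reason forcing smoothing directions to be of the special combinatorial type produced above. I expect this to require a direct classification of smoothing components through degree-by-degree deformation theory: lifting a putative smoothing order by order and reading off, from the obstruction maps $T^1_V\to T^2_V$ in each $M$-degree, a combinatorial decomposition of $F$ that one then recognizes as a $0$-mutable polynomial. Controlling these higher obstructions uniformly, with no finiteness to lean on, is precisely where the argument currently breaks down, and is why the conjecture can at present only be verified in examples.
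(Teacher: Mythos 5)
You should first note a structural point: the statement you were asked to prove is \emph{Conjecture~A}, and the paper contains no proof of it --- the authors state explicitly that they are ``some distance away from being able to prove the conjectures,'' and what the paper offers instead is evidence (Altmann's theorem in the isolated-singularity case, the lci case via Nakajima polytopes, and the worked example of Section~\ref{sec:evidence-matej}). So there is no proof in the paper to compare yours against; the only fair comparison is between your program and theirs. On that score, your proposal reconstructs the paper's intended route quite faithfully: the map $\kappa$ is to come from the mirror construction of Section~\ref{sec:intr-mirr-symm}, i.e.\ from Conjecture~B via Remark~\ref{rem:conjBimpliesmapdefined}, and the consistency checks you propose (reduction to Altmann's Minkowski correspondence for isolated singularities, tangent-space matching) are exactly the paper's evidence.

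But as a \emph{proof} the proposal has genuine gaps at every load-bearing step, and you should be explicit that these are not technical details one could fill in. First, the construction of a flat family $(\cU_f,\cD_f)\to\moduli{f}$ from the pair $(X,B)$ (or from the three-dimensional mirror $(W,D)$) is not an available tool: the paper only \emph{sketches} it, hedged by ``mirror symmetry suggests --- modulo issues with infinite-dimensionality,'' and the assertion that such a $\TT$-invariant submanifold of $\Defo(V,\divV)$ exists with smooth general fibre \emph{is} Conjecture~B, which is unproven; you cannot invoke it as an ingredient. Second, your injectivity argument rests on recovering the coefficients of $f$ from holomorphic-disc counts on the general fibre, but the paper says precisely that no such statement exists yet (``we would love to see a precise statement along these lines''); so injectivity is floating on a second open conjecture. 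Third, as you candidly admit, surjectivity --- that every smoothing component arises this way --- has no known approach in the non-isolated case: the degree-by-degree obstruction analysis you outline is exactly what becomes unmanageable without finiteness, which is why even the single example of Section~\ref{sec:evidence-matej} required computer calculation (Proposition~\ref{pro:eq}) rather than a structural argument. In short: your plan is the right plan, and it is the authors' plan, but it is a research program, not a proof, and each of its three stages currently terminates in an open problem.
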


\begin{rem}
	\label{rem:infinite_dimensional}
	It is absolutely crucial to appreciate that we are not assuming that
	$V$ has an isolated singularity at the toric $0$-stratum. If $V$
	does not have isolated singularities, $\Defo V$ is
	infinite-dimensional. A few words are in order to clarify what kind
	of infinite dimensional space $\Defo V$ is.
	
	In full generality, there is some discussion of this issue in the
	literature on the analytic category, see for example~\cite{MR713093,
		MR803194}.\footnote{We thank Jan Stevens for pointing out these
		references to us. We are not aware of a similar discussion in the
		algebraic literature.} In the special situation of interest in
	this paper, we take a na\"{\i}ve approach, which we briefly explain,
	based on the following two key facts:
	\begin{enumerate}[(i)]
		\item If $V$ is a Gorenstein toric affine 3-fold, then $T^2_V$ is
		finite dimensional. Indeed $V$ has transverse
		$A_\star$-singularities in codimension two, hence it is
		unobstructed in codimension two, hence $T^2_V$ is a finite length
		module supported on the toric $0$-stratum. In fact, there is an
		explicit description of $T^2_V$ as a representation of the torus,
		see \cite[Section~5]{altmann_andre_quillen_cohomology}, an example
		of which is in Lemma~\ref{lem t2} below. This shows that $\Defo V$
		is cut out by finitely many equations.
		\item On the other hand, the known explicit description of $T^1_V$
		as a representation of the torus
		\cite[Theorem~4.4]{altmann_one_parameter}, together with the
		explicit description of $T^2_V$ just mentioned, easily implies
		that each of the equations can only use finitely many
		variables.\footnote{The key point is to show that for all fixed
			weights $\mathbf{m}\in M$, the equation
			$\mathbf{m}=\sum m_i \mathbf{v}_i$ has finitely many solutions
			for $m_i\in \NN\setminus \{0\}$ and $\mathbf{v}_i\in M$ a weight
			that appears non-trivially in $T^1_V$. This type of
			consideration is used extensively in the detailed example
			discussed in Section~\ref{sec:evidence-matej}.}
	\end{enumerate}
	Thus we can take $\Defo V$ to be the $\Spec$ of a non-Noetherian
	ring, that is, the simplest kind of infinite-dimensional
	scheme.\footnote{The situation is not so simple for the universal
		family $\cU\to \Defo V$. Indeed the equations of $\cU$ naturally
		involve all the infinitely many coordinate functions on
		$T^1_V$. Thus, $\cU$ is a bona fide ind-scheme. The language to
		deal with this exists but it is not our concern here.}
\end{rem}

\begin{rem}
	\label{rem:1}
	Conjecture~A does not state what the function $\kappa$ is, nor what
	makes it ``canonical.'' The existence of a function $\kappa \colon
	\mathfrak{B}\to \mathfrak{A}$ is implied by Conjecture~B stated in
	Section~\ref{sec:conjectures} below, see Remark~\ref{rem:conjBimpliesmapdefined}.
\end{rem}

\begin{rem} \label{rem:deformations_variety_or_pair}
	Let $V$ be a Gorenstein toric affine \mbox{$3$-fold} and let $\divV$
	be the toric boundary of $V$.  Let $\Defo (V, \divV)$ be the
	deformation functor (or the base space of the miniversal
	deformation) of the pair $(V, \divV)$.  There is an obvious
	forgetful map $\Defo (V, \divV) \to \Defo V$. In this case, since
	$\divV$ is an effective Cartier divisor in $V$ and $V$ is affine,
	this map is smooth of relative dimension equal to the dimension of
	$\coker \left( H^0(\theta_V) \to H^0(N_{\divV / V}) \right)$, where $\theta_V$ is
	the sheaf of derivations on $V$ and $N_{\divV / V} = \cO_{\partial V}(\partial V)$ is the normal
	bundle of $\divV$ inside $V$.  In particular, this implies that
	$\Defo V$ and $\Defo (V, \divV)$ have exactly the same irreducible
	components.  One can also see that a smoothing component in
	$\Defo V$ is the image of a component of $\Defo (V, \divV)$ where
	also $\divV$ is smoothed. 
	
	In other words, we can equivalently work with deformations of $V$
	or deformations of the pair $(V,\divV)$. The right thing to do in
	mirror symmetry is to work with deformations of the pair $(V,\divV)$; however, 
	the literature on deformations of singularities is all written in
	terms of $V$. In most cases it is not difficult to make the translation but this
	paper is not the right place for doing that. Thus when possible we
	work with deformations of $V$. The
	formulation of Conjecture~B in Section~\ref{sec:intr-mirr-symm}
	requires that we work with deformations of the pair $(V, \divV)$. 
\end{rem}

\subsection{The definition of $0$-mutable Laurent polynomials}
\label{sec:def-of-0-mutable}

This subsection is occupied by the definition of $0$-mutable Laurent
polynomials. The simple key idea --- and the explanation for the name
``$0$-mutable'' --- is that an irreducible Laurent polynomial $f$ is
$0$-mutable if and only if there is a sequence of mutations
\[
f\mapsto f_1\mapsto \cdots \mapsto f_p=\mathbf{1}
\]
starting from $f$ and ending with the constant monomial
$\mathbf{1}$. The precise definition is given below after some
preliminaries on mutations. The definition is appealing and it is
meaningful in all dimensions, but it is not immediately useful if you
want to study $0$-mutable polynomials. Indeed, for example, to prove
that a given polynomial $f$ is $0$-mutable one must produce a chain of
mutations as above and it may not be obvious where to look. It is even
less clear how to prove that $f$ is not $0$-mutable. In
Section~\ref{sec:properties-0-mutable},
Theorem~\ref{thm:equivalent_defns}, we prove two useful
characterizations of $0$-mutable polynomials in two variables. The
first of these states that a polynomial is $0$-mutable if and only if
it is rigid maximally mutable. From this property it is easy to check
that a given polynomial is $0$-mutable, that it is not $0$-mutable,
and to enumerate $0$-mutable polynomials with given Newton
polytope. The second characterization states that a (normalized, see
below) Laurent polynomial in two variables is $0$-mutable if and only
if the irreducible components of its vanishing locus are $-2$-curves
on the cluster surface: see Section~\ref{sec:properties-0-mutable} for
explanations and details.

\medskip

Let $\LL$ be an affine lattice and let $\LL_0$ be its underlying
lattice.\footnote{In our setup $\LL=(\usp =1)\subset N$ does not have
	a canonical origin. We try to be pedantic and write $\LL_0$ for the
	the underlying lattice -- in our example, $\LL_0=\Ker \usp$. In
	practice this distinction is not super-important and you are free to
	choose an origin anywhere you want.}

In other words, $\LL_0$ is a free abelian group of finite rank and
$\LL$ is a set together with a free and transitive $\LL_0$-action.  We
denote by $\CC[\LL]$ the vector space over $\CC$ whose basis is made
up of the elements of $\LL$.  For every $l \in \LL$ we denote by $x^l$
the corresponding element in $\CC[\LL]$.  Elements of $\CC[\LL]$ will
be called (Laurent) polynomials.  It is clear that $\CC[\LL]$ is a
rank-$1$ free module over the $\CC$-algebra $\CC[\LL_0]$.  The choice
of an origin in $\LL$ specifies an isomorphism
$\CC[\LL] \simeq \CC[\LL_0]$.

\begin{dfn}
	\label{dfn:normalised} A Laurent polynomial $f\in \CC[\LL]$ is
	\emph{normalized} if for all vertices $v\in \Newt f$ we have $a_v=1$
	where $a_v$ is the coefficient of the monomial $x^v$ as it appears
	in $f$.
\end{dfn}

In this paper all polynomials are assumed to be normalized unless
explicitly stated otherwise. 

If $f\in \CC[\LL]$ then we say that $f$
\emph{lives} on the smallest saturated affine sub-lattice $\LL^\prime\subseteq \LL$ such
that $f\in \CC[\LL^\prime]$. The property of being $0$-mutable only
depends on the lattice where $f$ lives --- here it is crucial that we only allow saturated sub-lattices.
We say that $f$ is an
$r$-variable polynomial if $f$ lives on a rank-$r$ affine lattice. 

Let us start by defining $0$-mutable polynomials in $1$ variable.

\begin{dfn} \label{dfn:0-mutable_1_variable} Let $\LL$ be an affine
	lattice of rank $1$ and let $\LL_0$ be its underlying lattice. Let
	$v \in \LL_0$ be one of the two generators of $\LL_0$. A
	polynomial $f \in \CC[\LL]$ is called \emph{$0$-mutable} if
	\[
	f = (1+x^v)^k x^l
	\]
	for some $l \in \LL$ and $k \in \NN$.
\end{dfn}

(It is clear that the definition does not depend on the choice of $v$.)

If $f$ is a Laurent polynomial in 1 variable and its Newton polytope
is a segment of lattice length $k$, then $f$ is $0$-mutable if and only if
the coefficients of $f$ are the $k+1$ binomial coefficients of weight $k$.

\medskip

The definition of $0$-mutable polynomials in more than 1 variable will
be given recursively on the number of variables.  Thus from now on we
fix $r \geq 2$ and we assume to know already what it means for a
polynomial of $<r$ variables to be $0$-mutable.  Before we can state
what it means for a polynomial $f$ of $r$ variables to be $0$-mutable, 
we need to explain how to mutate $f$.

If $\LL$ is an affine lattice, we denote by $\Aff (\LL, \ZZ)$ the
lattice of affine-linear functions $\varphi \colon \LL \to \ZZ$. 
If $\LL_0$ denotes the underlying lattice of $\LL$, $\varphi$ has a
well-defined \emph{linear part} which we denote by $\varphi_0\colon \LL_0\to \ZZ$.

\begin{dfn} 
	\label{dfn:mutation}
	Let $r\geq 2$ and fix a rank-$r$ affine lattice $\LL$.
	
	A \emph{mutation datum} is a pair $(\varphi, h)$ of a non-constant affine-linear
	function $\varphi \colon \LL \to \ZZ$ and a $0$-mutable polynomial
	$h\in \CC[\Ker \varphi_0]$.
	
	Given a mutation datum $(\varphi,h)$ and $f\in \CC[\LL]$, write (uniquely)
	\[
	f=\sum_{k\in \ZZ} f_k 
	\qquad
	\text{where}
	\qquad
	f_k \in \CC[(\varphi =k) \cap \LL]
	\]
	We say that $f$ is \emph{$(\varphi,h)$-mutable} if for all $k<0$
	$h^{-k}$ divides $f_k$ (equivalently, if for all $k \in \ZZ$
	$h^kf_k\in \CC[\LL]$). If $f$ is $(\varphi,h)$-mutable, then the
	\emph{mutation} of $f$, with respect to the mutation datum
	$(\varphi,h)$, is the polynomial:
	\[
	\mut_{(\varphi, h)} f =\sum_{k\in \ZZ} h^k f_k.
	\]
\end{dfn}

\begin{rem}
	The notion of mutation goes back (at least) to
	Fomin--Zelevinsky~\cite{fomin_zelevinsky}. We first learned of
	mutations from the work of Galkin--Usnich~\cite{galkin_usnich}
	and~Akhtar--Coates--Galkin--Kasprzyk~\cite{sigma}. This paper owes a
	significant intellectual debt to the interpretation of mutations
	developed in work by Gross, Hacking and Keel, for instance
	\cite{MR3314827, MR3350154, MR3415066}.\footnote{Many mathematicians
		work on mutations from different perspectives and we apologize for
		not even trying to quote all the relevant references here.}
\end{rem}

The following is a recursive definition. The base step is given by
Definition~\ref{dfn:0-mutable_1_variable}.

\begin{dfn}
	\label{dfn:0-mutable}
	Let $\LL$ be an affine lattice of rank $r \geq 2$. We define the set of $0$-mutable polynomials on $\LL$
	in the following recursive way.
	\begin{enumerate}[(i)]
		\item If $\LL^\prime$ is a saturated affine sub-lattice of $\LL$
		and $f \in \CC[\LL^\prime]$ is $0$-mutable, then $f$ is
		$0$-mutable in $\CC[\LL]$.
		\item If $f=f_1 f_2$ is reducible, then $f$ is 0-mutable if both
		factors $f_1$, $f_2$ are $0$-mutable.\footnote{In order to take the ``product'' $f_1f_2$
			one has to choose an origin in $\LL$. This choice makes no
			difference to the definition.}
		\item If $f$ is irreducible, then $f$ is $0$-mutable if a mutation of $f$ is $0$-mutable.
	\end{enumerate}
\end{dfn}

Equivalently, the set of $0$-mutable polynomials of $\leq r$
variables is the smallest subset of $\CC[\LL]$ that contains all $0$-mutable
polynomials of $<r$ variables and that is closed under the operations
of taking products (in particular translation) and mutations of
irreducible polynomials.

\begin{rem}
	\label{rem:2}
	It follows easily from the definition that $0$-mutable polynomials
	are normalized. Indeed:
	\begin{enumerate}[(1)]
		\item The polynomials in Definition~\ref{dfn:0-mutable_1_variable}
		(the base case) are $0$-mutable;
		\item The product of two normalized polynomials is normalized;
		\item The mutation of a normalized polynomial is normalized. 
	\end{enumerate}
\end{rem}

\begin{exa}
	
	Let $\LL$ be an affine lattice and let $v$ be a primitive vector in
	the underlying lattice $\LL_0$.
	Definition~\ref{dfn:0-mutable_1_variable} and
	Definition~\ref{dfn:0-mutable}(i) imply that $(1+x^v)^k x^l$ is
	$0$-mutable for all $l \in \LL$ and $k \in \NN$.
\end{exa}

\begin{exa} \label{exa:conePP123}
	Consider the triangle
	\begin{equation} \label{eq:triangle_123}
	F = \hull{(0,0),(3,0),(3,2)}
	\end{equation}
	in the lattice $\LL = \ZZ^2$. Let us identify $\CC[\LL]$ with
	$\CC[x^\pm, y^\pm]$.  One can prove that there are exactly two
	$0$-mutable polynomials with Newton polytope $F$, namely:
	\begin{align*}
		(1+x)^3 + 2(1+x)x^2y + x^3y
		\qquad \text{and} \qquad
		(1+y)^2 x^3 + 3 (1+y)x^2 + 3x + 1.
	\end{align*}
	In Figure~\ref{fig:123} we have written the coefficients of these two polynomials
	next to the lattice point of $F$ associated to the corresponding
	monomial.
	
	It is shown in \cite{1812.02594} that $\Defo V$ has two components, and
	that they are both smoothing components, confirming our
	conjectures. The calculation there goes back to unpublished work by Jan
	Stevens, but see also~\cite{altmann_one_parameter}.
\end{exa}

\begin{figure}
	\centering
	\begin{tikzpicture}
	\foreach \x in {0, ..., 3}
	\foreach \y in {0, ..., 0}
	{
		\node[thickstyle] (\x - \y) at (\x, \y){};
	}
	\node[thickstyle] (3 - 1) at (3, 1){};
	\node[thickstyle] (3 - 2) at (3, 2){};
	\node[thickstyle] (2 - 1) at (2, 1){};
	\node[thinstyle] (0 - 1) at (0, 1){};
	\node[thinstyle] (1 - 1) at (1, 1){};
	\node[thinstyle] (0 - 2) at (0, 2){};
	\node[thinstyle] (1 - 2) at (1, 2){};
	\node[thinstyle] (2 - 2) at (2, 2){};
	\draw[thick] (0, 0) -- (3, 0) -- (3, 2) -- cycle;
	\node [anchor=east] at (0.25, 0.25) {$1$};
	\node [anchor=east] at (1.5, 0.25) {$3$};
	\node [anchor=east] at (2.5, 0.25) {$3$};
	\node [anchor=east] at (3.5, 0.25) {$1$};
	\node [anchor=east] at (2.5, 1.25) {$2$};
	\node [anchor=east] at (3.5, 1.25) {$2$};
	\node [anchor=east] at (3.5, 2.25) {$1$};
	\end{tikzpicture}
	\qquad \qquad
	\begin{tikzpicture}
	\foreach \x in {0, ..., 3}
	\foreach \y in {0, ..., 0}
	{
		\node[thickstyle] (\x - \y) at (\x, \y){};
	}
	\node[thickstyle] (3 - 1) at (3, 1){};
	\node[thickstyle] (3 - 2) at (3, 2){};
	\node[thickstyle] (2 - 1) at (2, 1){};
	\node[thinstyle] (0 - 1) at (0, 1){};
	\node[thinstyle] (1 - 1) at (1, 1){};
	\node[thinstyle] (0 - 2) at (0, 2){};
	\node[thinstyle] (1 - 2) at (1, 2){};
	\node[thinstyle] (2 - 2) at (2, 2){};
	\draw[thick] (0, 0) -- (3, 0) -- (3, 2) -- cycle;
	\node [anchor=east] at (0.25, 0.25) {$1$};
	\node [anchor=east] at (1.5, 0.25) {$3$};
	\node [anchor=east] at (2.5, 0.25) {$3$};
	\node [anchor=east] at (3.5, 0.25) {$1$};
	\node [anchor=east] at (2.5, 1.25) {$3$};
	\node [anchor=east] at (3.5, 1.25) {$2$};
	\node [anchor=east] at (3.5, 2.25) {$1$};
	\end{tikzpicture}
	\caption{The two $0$-mutable polynomials whose Newton polytope is the triangle $F$ defined in \eqref{eq:triangle_123}}
	\label{fig:123}
\end{figure}
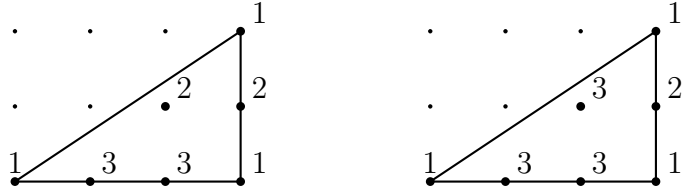

\begin{exa}\label{ex:quadrilateral}
	Consider the quadrilateral
	\begin{equation} \label{eq:quadrilateral}
	F = \hull{(-1,-1),(2,-1),(1,1),(-1,2)}
	\end{equation}
	in the lattice $\LL = \ZZ^2$. Let us identify $\CC[\LL]$ with $\CC[x^\pm, y^\pm]$.
	Consider the polynomial
	\[
	g = \frac{(1+x)^3 + (1+y)^3 - 1 + x^2 y^2 }{xy},
	\]
	which is obtained by giving binomial coefficients to the lattice
	points of the boundary of $F$ and by giving zero coefficient to the
	interior lattice points of $F$.  By Lemma~\ref{lem:easy_properties}(2)
	every $0$-mutable polynomial with Newton polytope $F$ must coincide
	with $g$ on the boundary lattice points of $F$.  One can prove that
	there are exactly three $0$-mutable polynomials with Newton polytope
	$F$, namely:
	\begin{gather*}
		\alpha = g + 5 + 2x + 2y = \frac{(1+x+2y+y^2)(1+2x+x^2+y)}{xy}, \\
		\beta = g + 6 + 3x + 4y = \frac{(1+x)^3 + 3y(1+x)^2 + y^2(1+x)(3+x) + y^3}{xy}, \\
		\gamma = g + 6 + 4x + 3y = \frac{(1+y)^3 + 3 x (1+y)^2 + x^2 (1+y)(3+y) + x^3}{xy}.
	\end{gather*}
	In Figure~\ref{fig:114} we have written down the coefficients of these
	three polynomials.
	The polynomial $\alpha$ is reducible and it is easy to show that its factors are $0$-mutable.
	
	\begin{figure}
		\centering
		\begin{tikzpicture}
		\foreach \x in {0,1,2}
		\foreach \y in {0,1,2}
		{
			\node[thickstyle](\x - \y) at (\x, \y){};
		}
		\node[thickstyle](3 - 0) at (3, 0){};
		\node[thickstyle](0 - 3) at (0, 3){};
		\draw[thick] (0, 0) -- (3, 0) -- (2, 2) -- (0, 3) -- cycle;
		\node [anchor=north] at (0, 0) {$1$};
		\node [anchor=north] at (1,0) {$3$};
		\node [anchor=north] at (2,0) {$3$};
		\node [anchor=north] at (3,0) {$1$};
		\node[anchor=east] at (0,1) {$3$};
		\node[anchor=east] at (0,2) {$3$};
		\node[anchor=east] at (0,3) {$1$};
		\node[anchor=south] at (2.2,1.9) {$1$};
		\node[anchor=north] at (1,1) {$5$};
		\node[anchor=north] at (2,1) {$2$};
		\node[anchor=north] at (1,2) {$2$};
		\node[anchor=north] at (3,3) {$\alpha$};
		\end{tikzpicture}
		\hfill
		\begin{tikzpicture}
		\foreach \x in {0,1,2}
		\foreach \y in {0,1,2}
		{
			\node[thickstyle](\x - \y) at (\x, \y){};
		}
		\node[thickstyle](3 - 0) at (3, 0){};
		\node[thickstyle](0 - 3) at (0, 3){};
		\draw[thick] (0, 0) -- (3, 0) -- (2, 2) -- (0, 3) -- cycle;
		\node [anchor=north] at (0, 0) {$1$};
		\node [anchor=north] at (1,0) {$3$};
		\node [anchor=north] at (2,0) {$3$};
		\node [anchor=north] at (3,0) {$1$};
		\node[anchor=east] at (0,1) {$3$};
		\node[anchor=east] at (0,2) {$3$};
		\node[anchor=east] at (0,3) {$1$};
		\node[anchor=south] at (2.2,1.9) {$1$};
		\node[anchor=north] at (1,1) {$6$};
		\node[anchor=north] at (2,1) {$3$};
		\node[anchor=north] at (1,2) {$4$};
		\node[anchor=north] at (3,3) {$\beta$};
		\end{tikzpicture}
		\hfill
		\begin{tikzpicture}
		\foreach \x in {0,1,2}
		\foreach \y in {0,1,2}
		{
			\node[thickstyle](\x - \y) at (\x, \y){};
		}
		\node[thickstyle](3 - 0) at (3, 0){};
		\node[thickstyle](0 - 3) at (0, 3){};
		\draw[thick] (0, 0) -- (3, 0) -- (2, 2) -- (0, 3) -- cycle;
		\node [anchor=north] at (0, 0) {$1$};
		\node [anchor=north] at (1,0) {$3$};
		\node [anchor=north] at (2,0) {$3$};
		\node [anchor=north] at (3,0) {$1$};
		\node[anchor=east] at (0,1) {$3$};
		\node[anchor=east] at (0,2) {$3$};
		\node[anchor=east] at (0,3) {$1$};
		\node[anchor=south] at (2.2,1.9) {$1$};
		\node[anchor=north] at (1,1) {$6$};
		\node[anchor=north] at (2,1) {$4$};
		\node[anchor=north] at (1,2) {$3$};
		\node[anchor=north] at (3,3) {$\gamma$};
		\end{tikzpicture}
		\caption{The three $0$-mutable polynomials $\alpha$, $\beta$ and $\gamma$ whose Newton polytope is the quadrilateral $F$ defined in \eqref{eq:quadrilateral}}
		\label{fig:114}
	\end{figure}
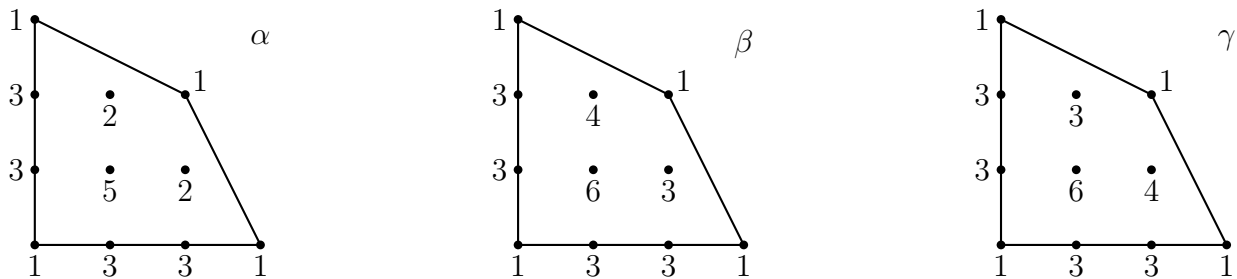
	
	Let us consider the following affine-linear functions $\LL = \ZZ^2 \to \ZZ$:
	\begin{align*}
		-m^1_{2,1} \colon (a,b) \mapsto b-1, \\
		-m^1_{3,1} \colon (a,b) \mapsto b-2, \\
		-m^1_{3,2} \colon (a,b) \mapsto 2b-1.
	\end{align*}
	The level sets of these three affine-linear functions are depicted in Figure~\ref{fig:level_sets}.
	\begin{figure}
		\centering
		\def\svgwidth{4cm}
		\input{Level_setsx.tex}
		\bigskip
		\medskip
		\caption{The level sets of the three affine-linear functions $\LL = \ZZ^2 \to \ZZ$ considered in Example~\ref{ex:quadrilateral}}
		\label{fig:level_sets}
	\end{figure}
	We now consider the mutation data $(-m^1_{2,1}, 1+x)$, $(-m^1_{3,2}, 1+x)$ and $(-m^1_{3,1}, 1+x)$.
	The polynomial $\alpha$ is mutable with respect to $(-m^1_{2,1}, 1+x)$ and to $(-m^1_{3,2}, 1+x)$ and
	\begin{gather*}
		\mut_{(-m^1_{2,1}, 1+x)} \alpha = \frac{1+x}{xy} + \frac{3+2x}{x} + \frac{y(3 + 2x + x^2)}{x} + \frac{y^2 (1+x)}{x}, \\
		\mut_{(-m^1_{3,2}, 1+x)} \alpha = \frac{1}{xy} + \frac{3+2x}{x} + \frac{y(3+5x + 3x^2 +x^3)}{x}  + \frac{y^2 (1+x)^3}{x},
	\end{gather*}
	but $\alpha$ is not mutable with respect to $(-m^1_{3,1}, 1+x)$.
	The polynomial $\beta$ is mutable with respect to all three mutation data $(-m^1_{2,1}, 1+x)$, $(-m^1_{3,2}, 1+x)$ and $(-m^1_{3,1}, 1+x)$, and the mutations are:
	\begin{gather*}
		\mut_{(-m^1_{2,1}, 1+x)} \beta = \frac{1+x}{xy} + \frac{3(1+x)}{x} + \frac{y(3 +x)(1+x)}{x} + \frac{y^2(1+x)}{x}, \\
		\mut_{(-m^1_{3,1}, 1+x)} \beta = \frac{1}{xy} + \frac{3}{x} + \frac{y(3 + x)}{x} + \frac{y^2}{x} = \frac{(1+y)^3 + xy^2}{xy}, \\
		\mut_{(-m^1_{3,2}, 1+x)} \beta = \frac{1}{xy} + \frac{3(1+x)}{x} + \frac{y(3 + x)(1+x)^2}{x} + \frac{y^2(1+x)^3}{x}.
	\end{gather*}
	The polynomial $\gamma$ is not mutable with respect to any of the mutation data $(-m^1_{2,1}, 1+x)$, $(-m^1_{3,2}, 1+x)$, $(-m^1_{3,1}, 1+x)$.
\end{exa}

\section{Properties of $0$-mutable polynomials}
\label{sec:properties-0-mutable}

\subsection{Some easy properties}

If $\LL$ is an affine lattice, $f\in \CC[\LL]$ and $F=\Newt f$, then we write
\[
f=\sum_{l\in F\cap \LL} a_l x^l \qquad \text{with} \qquad a_l\in \CC.
\]
For every subset $A\subseteq \LL_\RR$, we write
\[
f \vert_A=\sum_{l \in A\cap \LL} a_l x^l.
\]

\begin{lem}
	\label{lem:easy_properties}
	\begin{enumerate}[(1)]
		\item (Non-negativity and integrality) If $f$ is $0$-mutable, then every coefficient of $f$ is a non-negative integer.
		\item (Boundary terms) If $f\in \CC[\LL]$ is $0$-mutable and $F\leq \Newt f$ is a
		face, then $f \vert_F$ is $0$-mutable.
	\end{enumerate}
\end{lem}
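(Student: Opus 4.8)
The plan is to prove (1) and (2) simultaneously by structural induction on the recursive Definition~\ref{dfn:0-mutable}, with an auxiliary induction on the number of variables to control the $0$-mutable polynomial $h$ entering a mutation datum. Thus I assume both statements for all $0$-mutable polynomials in fewer variables, and, within a fixed number of variables, for every $0$-mutable polynomial built in fewer steps. The base case is Definition~\ref{dfn:0-mutable_1_variable}: the one-variable polynomial $(1+x^v)^kx^l$ has the binomial coefficients $\binom{k}{0},\dots,\binom{k}{k}$, which gives (1), while its only faces are the whole Newton segment, restricting to $f$ itself, and the two endpoints, restricting to the monomials $x^l$ and $x^{l+kv}=(1+x^v)^0x^{l+kv}$, which gives (2).

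For the inductive step I treat the three clauses of Definition~\ref{dfn:0-mutable}. Clause (i) (passing to a saturated sublattice) alters neither the coefficients nor the face poset, so both properties are inherited. Clause (ii) (products $f=f_1f_2$): non-negativity and integrality follow since $\NN$ is closed under addition and multiplication; for the boundary statement I use that $\Newt f=\Newt f_1+\Newt f_2$ and that, for any supporting functional $u$, the initial form is multiplicative, so the $u$-maximal face decomposes as $F=F_1+F_2$ with $F_i=(\Newt f_i)^u$ and $f\vert_F=(f_1\vert_{F_1})(f_2\vert_{F_2})$. Each factor is $0$-mutable by the inductive hypothesis, hence so is $f\vert_F$ by clause (ii).

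The crux is clause (iii): $f$ is irreducible and $g:=\mut_{(\varphi,h)}f$ is $0$-mutable, so $g$ satisfies (1) and (2) by induction, and $h$ — living in one fewer variable — has non-negative integer coefficients. Along the $\varphi$-grading one has $g_k=h^kf_k$, so $f$ is recovered by the reverse mutation $f=\mut_{(-\varphi,h)}g$, with $f_k=h^{-k}g_k$. On $\{\varphi\le 0\}$ this is the pure multiplication $f_k=h^{|k|}g_k$, which preserves non-negative integrality outright. On $\{\varphi> 0\}$ it is the division $f_k=g_k/h^k$: here integrality follows from Gauss's lemma, since $h$ is normalized and therefore primitive, but \emph{non-negativity is the genuine obstacle}, because dividing a polynomial with non-negative coefficients by $h^k$ can in general introduce negative ones. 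I expect to remove this by showing that the reducing mutation may always be taken \emph{one-sided}, i.e.\ with $\Newt f\subseteq\{\varphi\le 0\}$, so that the slices with $\varphi>0$ are empty and $f=\sum_{k\le 0}h^{|k|}g_k$ is manifestly non-negative; the underlying point is that a mutation shrinking the Newton polygon towards $\mathbf 1$ folds it across the wall $\{\varphi=0\}$ and can be normalized to touch but not cross it. Failing a clean one-sided reduction, the fallback is to control the extreme $\varphi$-slice of $g$, which is an honest face of $\Newt g$ and hence $0$-mutable by part (2) already known for $g$, and then to propagate the divisibility inward slice by slice.

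Finally, for the boundary statement in clause (iii) I would verify that restriction to a face commutes with mutation. The mutation induces the standard piecewise-linear homeomorphism of $\LL_\RR$ that is linear on each of $\{\varphi\ge 0\}$ and $\{\varphi\le 0\}$ and carries $\Newt f$ onto $\Newt g$; since mutation is defined slice-wise, this map sends each face $F\le\Newt f$ to a face $F'\le\Newt g$ with $g\vert_{F'}=\mut_{(\varphi,h)}(f\vert_F)$ up to a monomial. By induction $g\vert_{F'}$ is $0$-mutable, whence $f\vert_F$ is $0$-mutable, being its reverse mutation (clause (iii)) after passing to the sublattice on which it lives (clause (i)). The subtle case, and the main obstacle for part (2), is a face $F$ straddling the wall $\{\varphi=0\}$, where the two linear pieces must be matched along $\Ker\varphi_0$; I expect the compatibility of $h\in\CC[\Ker\varphi_0]$ with this wall to make the matching work.
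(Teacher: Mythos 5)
Your skeleton --- structural induction over Definition~\ref{dfn:0-mutable}, with clauses (i)--(ii) handled via sublattices and multiplicativity of faces under Minkowski sum, and the crux placed at the mutation clause --- is the same route as the paper's own proof, which is a two-line sketch asserting that (1) is immediate from the recursion and that (2) holds because mutations and products respect restriction to faces. But your write-up does not close the one step where all the content sits, and you say so yourself. In clause (iii), recovering $f$ from $g=\mut_{(\varphi,h)}f$ divides the slices $g_k$ by $h^k$ on $\{\varphi>0\}$, and exact division by a non-negative polynomial does not preserve non-negativity (e.g.\ $(1+x^3)/(1+x)=1-x+x^2$). Your proposed repair --- that the witnessing mutation can always be taken one-sided, with $\Newt f\subseteq\{\varphi\le 0\}$ --- is an unproved lemma, not a harmless normalization: the recursive definition hands you an arbitrary witness, and witnesses genuinely do straddle the wall (for $\beta$ in Example~\ref{ex:quadrilateral}, the datum $(-m^1_{2,1},1+x)$ straddles; a one-sided witness $(-m^1_{3,1},1+x)$ happens to exist there, but you prove nothing of the sort in general, nor that a one-sided witness can be chosen with smaller construction depth). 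The fallback of ``propagating divisibility inward slice by slice'' is not an argument either, since interior slices are not faces and part (2) gives no hold on them. So part (1) in the mutation step is a genuine, unfilled gap.

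Moreover, the device you propose for part (2) in clause (iii) is false as stated. Mutation does not act on Newton polytopes by the piecewise-linear shear that is linear on the two half-planes: for $k>0$ the level-$k$ slice of $\Newt(\mut_{(\varphi,h)} f)$ is $\Newt f_k + k\,\Newt h$, a Minkowski sum, not a translate of $\Newt f_k$. Concretely, $f=1+y$, $\varphi(a,b)=b$, $h=1+x$ gives $\mut_{(\varphi,h)}f=1+y+xy$, whose Newton polygon is a triangle, not the image of the segment $\Newt f$ under any PL homeomorphism. The correct face correspondence must be established slice-wise (a face-restriction of $\mut_{(\varphi,h)}f$ is a power of a face-restriction of $h$ times a face-restriction of some $f_k$), and even the statement that $\Newt(\mut_{(\varphi,h)}f)$ is the hull of the slice polytopes uses non-negativity to exclude cancellation --- so parts (1) and (2) are more entangled than your proposal acknowledges. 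In fairness, the paper's own proof is a sketch that glosses over exactly these points, and you have located the real difficulty more honestly than the text does; but as a proof, your proposal leaves the decisive step open and rests one limb on a false intermediate claim.
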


\begin{proof}[Sketch of proof]
	(1) is obvious due to the recursive definition of 0-mutable polynomials. Also (2) is easy because it is enough to observe  that mutations and products behave well with respect to restriction to faces of the Newton polytope.
\end{proof}

\begin{rem}
	A $0$-mutable polynomial may have a zero coefficient at a lattice point of its Newton polytope, e.g.\ $(1+x)(1+xy^2)$.
\end{rem}

\subsection{Rigid maximally mutable polynomials}
\label{sec:rigid-maxim-mutable}

From now on we focus on the two-variable case $r=2$. In what follows,
we give two equivalent characterizations of $0$-mutable polynomials,
one geometric in terms of the associated cluster variety and one
combinatorial in terms of rigid maximally mutable
polynomials.\footnote{The concept of rigid maximally mutable is due to
	Al Kasprzyk \cite{KT}. We thank him for allowing us to include his
	definition here.}

Let $\LL$ be an affine lattice of rank $2$.
For a Laurent polynomial $f\in \CC[\LL]$, we set
\[
\seeds(f) = \Bigl\{ \text{mutation data $s=(\varphi, h)$} \ \Bigl\vert \  \text{$f$ is $s$-mutable}\Bigr\}.
\]
Conversely, if $\seeds$ is a set of mutation data, we denote by
\[
L(\seeds)=\Bigl\{f \in \CC[\LL]\ \Big\vert\  \text{$\forall s \in \seeds$, $f$ is $s$-mutable}\Bigr\}
\]
the vector space of Laurent polynomials $f$ that are $s$-mutable for
all the mutation data $s\in \seeds$. For every polynomial $f \in \CC[\LL]$, it is clear that $f \in L(\seeds(f))$.

\begin{dfn}[Kasprzyk]
	\label{dfn:rigidMMLP}
	Let $\LL$ be an affine lattice of rank $2$, and $f\in \CC[\LL]$.
	\begin{enumerate}[(i)]
		\item If $f=f_1f_2$ is the product of normalized polynomials $f_1$,
		$f_2$, then $f$ is rigid maximally mutable if both factors
		$f_1, f_2$ are rigid maximally mutable;
		\item If $f$ is normalized and irreducible, then $f$ is
		rigid maximally mutable if
		\[
		L \left( \seeds(f) \right) = \{\lambda f \mid \lambda \in \CC \}.
		\]
		
	\end{enumerate}
\end{dfn}

\subsection{Cluster varieties}

\begin{dfn}
	\label{dfn:CYpairANDcluster_var}
	A \emph{Calabi--Yau (CY) pair} is a pair $(Y, \omega)$ of an
	$n$-dimensional quasiprojective normal variety $Y$ and a degree $n$ rational
	differential $\omega \in \Omega^n_{k(X)}$, such that
	\[
	D =-\divi_Y \omega \geq 0
	\]
	is an effective reduced Cartier divisor on $Y$.\footnote{Like most people,
		we mostly work with the pair $(Y, D)$ and omit explicit reference
		to $\omega$.} 
	
	A \emph{torus chart} on $(Y,\omega)$ is an open embedding 
	\[
	j \colon (\CC^\times)^n \hookrightarrow Y\setminus D
	\qquad
	\text{such that}
	\qquad
	j^\star (\omega) = \frac{1}{(2\pi \rmi)^n} \frac{\rmd x_1}{x_1}\wedge \cdots \wedge \frac{\rmd x_n}{x_n}.
	\]
	
	A \emph{cluster variety} is an $n$-dimensional CY pair $(Y, \omega)$ that
	has a torus chart.
\end{dfn}

In our situation, the pair $(Y,D)$ will always be log smooth. 

\subsection{The cluster surface}
We construct a cluster surface from a
lattice polygon. 

Let $\LL$ be an affine lattice of rank $2$.
There is a canonical bijection
between the set of lattice polygons $F\subset \LL_\RR$ up to translation and
the set of pairs $(\Xbar, A)$ of a projective toric surface $\Xbar$ and an ample
line bundle $A$ on $\Xbar$: the torus in question is $\Spec \CC[\LL_0]$;
the fan of the surface $\Xbar$ is the normal fan of $F$, and it all works
out such that there is a natural $1$-to-$1$ correspondence between
$F\cap \LL$ and a basis of $H^0(\Xbar,A)$.

Fix a lattice polygon $F$ in $\LL$ and consider the corresponding
polarised toric surface $(\Xbar, A)$.
Denote by $\Bbar$ the toric boundary of $\Xbar$.
For each edge $E \leq F$,
let $\ell(E)$ be the lattice length of $E$ and let $\Bbar_E$ be the prime component
of $\Bbar$ corresponding to $E$;
we have that $\Bbar_E$ is isomorphic to $\PP^1$
and the line bundle $A \vert_{\Bbar_E}$ has degree $\ell(E)$.
Denote by $x_E \in \Bbar_E$ the point $[1:-1] \in \PP^1$.

For all edges $E \leq F$, blow up $\ell(E)$ times above $x_E$ in the proper
transform of $\Bbar_E$ and denote by
\[
p\colon (X,B) \longrightarrow (\Xbar, \Bbar)
\]
the resulting
surface, where $B \subset X$ is the proper transform of the toric
boundary $\Bbar = \sum_{E \leq F} \Bbar_E$.
We call the pair $(X,B)$ the \emph{cluster
	surface} associated to the lattice polygon $F$.

\begin{thm}
	\label{thm:equivalent_defns}
	Let $\LL$ be a rank-$2$ lattice. The following are equivalent for a
	normalized Laurent polynomial $f\in \CC [\LL]$:
	\begin{enumerate}[(1)]
		\item $f$ is $0$-mutable;
		\item $f$ is rigid maximally mutable;
		\item  Let
		$p \colon (X,B) \to (\Xbar, \Bbar)$ be the cluster surface
		associated to the polygon $F=\Newt f$.
		Denote by $Z \subset \Xbar$ the divisor of zeros of $f$ and by
		$Z^\prime \subset X$ the proper transform of $Z$. Every irreducible
		component $\Gamma \subset Z^\prime$ is a smooth rational curve
		with self-intersection $\Gamma^2=-2$. (Necessarily then $B \cdot \Gamma
		=0$ hence $\Gamma$ is disjoint from the boundary $B$.) 
	\end{enumerate}
\end{thm}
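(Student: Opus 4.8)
The plan is to prove the cycle $(1)\Rightarrow(2)\Rightarrow(3)\Rightarrow(1)$, using $(2)\Leftrightarrow(3)$ as a geometric bridge between the combinatorics of mutations and the intersection theory of the cluster surface. Since all three conditions are multiplicative — for (1) and (2) this is explicit in Definitions~\ref{dfn:0-mutable} and~\ref{dfn:rigidMMLP}, and for (3) it holds because the zero divisor of a product is the union of the zero divisors of the factors — the first step is to reduce to the case where $f$ is irreducible. Here I would flag one compatibility that needs checking: a factorisation $f=f_1f_2$ induces a Minkowski decomposition $\Newt f=\Newt f_1+\Newt f_2$, and one must verify that the $(-2)$-curve condition on the cluster surface of $\Newt f$ matches the corresponding conditions on the cluster surfaces of $\Newt f_1$ and $\Newt f_2$; this is a local computation comparing the three toric models and their boundary blow-ups.

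The geometric heart is the bridge $(2)\Leftrightarrow(3)$, which I would set up through a single dictionary. First note that $(X,B)$ is log Calabi--Yau: $(\Xbar,\Bbar)$ is toric so $K_{\Xbar}+\Bbar\sim 0$, and since each $x_E=[1:-1]$ is a smooth point of a single boundary component, blowing it up and passing to proper transforms preserves $K_X+B\sim 0$. Adjunction then shows that a reduced irreducible curve $\Gamma$ disjoint from $B$ has $\Gamma\cdot K_X=0$ and $p_a(\Gamma)=1+\tfrac12\Gamma^2$; in particular a \emph{rigid} such curve (one with $\Gamma^2<0$) is forced to be smooth rational with $\Gamma^2=-2$, and conversely. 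The dictionary I would prove is that $L(\seeds(f))$ is naturally identified with $H^0\bigl(X,\cO_X([Z'])\bigr)$, where $[Z']$ is the class of the proper transform of the zero divisor cut out by the prescribed boundary data. Concretely: the edge restrictions $f\vert_E$ are one-variable $0$-mutable, hence binomials $(1+x^{v_E})^{\ell(E)}x^l$ by Lemma~\ref{lem:easy_properties}(2), so $Z$ meets $\Bbar_E$ only at $x_E$ with multiplicity $\ell(E)$; this is exactly the condition that makes $Z'$ disjoint from $B$ after the $\ell(E)$ blow-ups, and it is precisely the edge-mutability conditions that generate $\seeds(f)$. With this identification, rigid maximal mutability $L(\seeds(f))=\CC f$ becomes $h^0(X,\cO_X([Z']))=1$, i.e.\ $Z'$ is the unique effective divisor in its class; combined with the adjunction computation this holds if and only if every component of $Z'$ is a $(-2)$-curve disjoint from $B$, which is (3).

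For $(1)\Rightarrow(2)$ I would argue by induction on the length of a mutation chain $f\mapsto f_1\mapsto\cdots\mapsto\mathbf 1$. The base case is trivial since $L(\seeds(\mathbf 1))=\CC\cdot\mathbf 1$. For the inductive step the key lemma is that a mutation $\mut_{(\varphi,h)}$ is an invertible linear map on its domain that carries $\seeds(f)$ bijectively to $\seeds(g)$, where $g=\mut_{(\varphi,h)}f$, by the usual ``mutation of mutation data''; consequently it restricts to a linear isomorphism $L(\seeds(f))\xrightarrow{\sim}L(\seeds(g))$ sending $f$ to $g$. If $g$ is rigid maximally mutable then $L(\seeds(g))=\CC g$, whence $L(\seeds(f))=\CC f$. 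To close the cycle with $(3)\Rightarrow(1)$ — which, via the already-established $(2)\Leftrightarrow(3)$, also yields $(2)\Rightarrow(1)$ — I would run a reduction on the cluster surface: a non-empty configuration of $(-2)$-curves disjoint from $B$ must, together with the interior $(-1)$-curves of $X$, admit a contraction or elementary transformation realising a single mutation of $f$ that strictly decreases a well-founded invariant of $\Newt f$ (for instance its normalised volume, or its number of lattice points), terminating when $\Newt f$ is a point and $f=\mathbf 1$.

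The main obstacle is twofold, and both parts are geometric. The first is making the dictionary $L(\seeds(f))\cong H^0(X,\cO_X([Z']))$ precise: one must show that $s$-mutability for every $s\in\seeds(f)$ is equivalent to the contact and vanishing conditions defining the linear system, and in particular that no ``extra'' seeds beyond the edge data enlarge the constraints — this is where the careful bookkeeping of how mutations act on the support of $f$ is needed, and where I expect the argument to be most delicate. The second is the termination step in $(3)\Rightarrow(1)$: producing at each stage a mutation that genuinely simplifies $\Newt f$ amounts to exhibiting a suitable contraction of the cluster surface and checking that the algebraic mutation $\mut_{(\varphi,h)}$ corresponds to the induced pseudo-isomorphism between the cluster surfaces attached to $\Newt f$ and $\Newt g$; controlling that these two a priori different surfaces are related by a pseudo-isomorphism preserving the $(-2)$-curve configuration is the crux, and is the step I would expect to require the deepest input from the Gross--Hacking--Keel theory.
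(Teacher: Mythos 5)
Your central dictionary is the same as the paper's: the identification $L(\seeds(f))\cong H^0\bigl(X,\cL(\seeds(f))\bigr)$ is exactly the paper's ingredient~(iii), and adjunction on the log Calabi--Yau pair $(X,B)$ is how both you and the paper pass between rigidity and the $(-2)$-condition. But your argument for $(2)\Rightarrow(3)$ has a genuine gap. What your adjunction computation proves is that a curve \emph{already known} to be disjoint from $B$ and to have $\Gamma^2<0$ is a smooth rational $(-2)$-curve; what must be proved is that rigidity in the sense of Definition~\ref{dfn:rigidMMLP}, i.e.\ $h^0(X,\cO_X(Z'))=1$, \emph{forces} $Z'\cdot B=0$ and $Z'^2<0$. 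Your parenthetical ``(one with $\Gamma^2<0$)'' simply redefines rigidity as negative self-intersection, which it is not. The paper closes this by Riemann--Roch: $h^2(X,\cL)=h^0(X,-B-Z')=0$, so $1=h^0(X,Z')\geq\chi(X,\cL)=1+\tfrac12(Z'^2+Z'\cdot B)$, giving $Z'^2\leq 0$ and $2p_a(Z')-2=Z'^2-Z'\cdot B\leq 0$; and it must then still kill the case $p_a(Z')=1$, $Z'^2=Z'\cdot B=0$, where adjunction gives $\cO_{Z'}(Z')\cong\omega_{Z'}\cong\cO_{Z'}$, whence $H^0(X,Z')\to H^0(Z',\cO_{Z'}(Z'))=\CC$ is surjective and $h^0(X,Z')=2$, a contradiction. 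A rigid arithmetic-genus-one curve with self-intersection zero is precisely the configuration your ``if and only if'' does not rule out, so this case analysis cannot be skipped.

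The larger gap is in closing the cycle. The paper's $(1)\Leftrightarrow(3)$ rests on two inputs you never invoke: Lemma~\ref{lem:clustery}, which produces a second toric model $p_1\colon(X,B)\to(X_1,B_1)$ contracting the interior $(-2)$-curve $Z'$, and Blanc's theorem \cite{MR3080816} that \emph{any} two torus charts on $X$ are connected by a sequence of mutations. Given these, the birational map $j_1^{-1}j\colon\TT\dasharrow\TT$ is a composition of mutations carrying $f$ to a polynomial with no zeros on the torus, i.e.\ to $\mathbf{1}$ --- no step-by-step descent is needed, and since both toric models live on the one surface $X$, the comparison between the cluster surfaces of $\Newt f$ and of its mutation, which you correctly identify as the crux of your scheme, never arises. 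Your substitute for $(3)\Rightarrow(1)$ --- at each stage exhibit a single mutation strictly decreasing the volume of $\Newt f$ --- is exactly the assertion that carries the difficulty: condition (3) does not hand you such a mutation, you give no construction of one, and the termination scheme amounts to reproving Blanc's connectedness result by hand. Likewise, your $(1)\Rightarrow(2)$ induction hinges on the ``mutation of mutation data'' lemma (that $\mut_{(\varphi,h)}$ carries $\seeds(f)$ bijectively to $\seeds(g)$ and induces $L(\seeds(f))\cong L(\seeds(g))$); this is not ``usual'' --- it is precisely the chart-independence content of the line-bundle dictionary, and in the paper it is subsumed in ingredient~(iii) rather than available for free. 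So the skeleton of your proposal is right, but the two geometric inputs that make the paper's proof work are missing, and what you propose in their place are unestablished assertions rather than arguments.
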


\begin{rem}
	\label{rem:4}
	The support of $Z^\prime$ is not necessarily a normal crossing
	divisor. The irreducible components need not meet transversally, and
	$\geq 3$ of them may meet at a point.
\end{rem}

\begin{proof}[Sketch of proof] In proving all equivalences we may and
	will assume that the polynomial $f$ is irreducible hence $Z$ is
	reduced and irreducible.
	
	The proof uses the following ingredients, which we state without
	further discussion or proof:
	\begin{enumerate}[(i)]
		\item To give a torus chart $j\colon \CC^{\times\, 2} \hookrightarrow
		X \setminus B$ in $X$ is the same as to give a
		\emph{toric model} of $(X,B)$, that is a projective morphism
		$q\colon (X, B) \to (X^\prime, B^\prime)$ where $(X^\prime,
		B^\prime)$ is a toric pair and $q$ maps
		$j(\CC^{\times \, 2})$ isomorphically to the torus $X^\prime \setminus
		B^\prime$; 
		\item The work of Blanc~\cite{MR3080816} implies that any two torus charts in $X$ are connected
		by a sequence of mutations between torus charts in $X$;
		\item A set $\seeds$ of mutation data specifies a line bundle $\cL(\seeds)$ on $X$
		such that $H^0\bigl(X,\cL(\seeds)\bigr)=L(\seeds)$ and, conversely, every line bundle on $X$
		is isomorphic to a line bundle of the form $\cL(\seeds)$. 
	\end{enumerate}
	
	Let us show first that (1) implies (3). To say that an irreducible
	polynomial $f$ is $0$-mutable is to say that there exists a sequence
	of mutations that mutates $f$ to the constant polynomial $\mathbf{1}$.
	This sequence of mutations constructs a new torus chart
	$j_1\colon \TT = \CC^{\times\, 2} \hookrightarrow X\setminus B$ such that the proper
	transform $Z^\prime$ -- which is, by assumption, irreducible -- is
	disjoint from $j_1 (\TT)$. This new toric chart gives a new toric model
	$p_1 \colon (X,B) \to (X_1, B_1)$ that maps $j_1 (\TT)$
	isomorphically to the torus $X_1 \setminus B_1$ and
	hence contracts $Z^\prime$ to a boundary point. $Z^\prime$ is not a $-1$-curve, because those
	are all $p$-exceptional, hence $Z^\prime$ is a $-2$-curve.
	
	To show that (3) implies (1), by Lemma~\ref{lem:clustery} below, there
	is a new toric model $p_1\colon (X, B) \to (X_1,B_1)$ that contracts
	$Z^\prime$ to a point in the boundary. The new toric model then gives a
	new torus chart $j_1\colon \TT \hookrightarrow X$ such that $Z^\prime $ is
	disjoint from $j_1(\TT)$. By Blanc the induced birational map of tori
	\[
	j_1^{-1}j \colon \TT \dasharrow \TT
	\]
	is a composition of mutations that mutates $f$ to the constant
	polynomial. 
	
	Let us now show that (3) implies (2). By some tautology, $Z^\prime$ is the
	zero divisor of the section $f$ of the line bundle $\cL(\seeds)$ on $X$ specified by
	the set of mutation data $\seeds=\seeds(f)$, and $H^0(X, Z^\prime)=L
	(\seeds)$. Since $Z^\prime$ is a $-2$-curve, $L(\seeds)$ is
	$1$-dimensional, which is to say that $f$ is rigid maximally mutable.
	
	Finally we show that (2) implies (3). Denote by $\cL=\cL(\seeds)$
	the line bundle on $X$ specified by the set of mutations $\seeds=\seeds(f)$, so
	that $Z^\prime$ is the zero-locus of a section of $\cL$. Note that:
	\[
	h^2(X, \cL)=h^0(X, K_X-Z^\prime)=h^0(Y, - \Bbar -Z^\prime)=0
	\]
	Riemann--Roch and the fact that $f$ is rigid give:
	\[
	1=h^0(X, Z^\prime)=h^0(X, \cL)\geq \chi(X, \cL)=1+ \frac{1}{2}(Z^{\prime
		\,2}+Z^\prime \Bbar)
	\]
	and hence, because $Z^\prime \Bbar \geq 0$, we conclude that $Z^{\prime \,
		2}\leq 0$ and:
	\[
	2p_a(Z^\prime)-2=\deg \omega_{Z^\prime} = Z^{\prime \, 2} - Z^\prime  \Bbar \leq 0
	\]
	so either:
	\begin{enumerate}[(i)]
		\item $\deg \omega_{Z^\prime}<0$ and then $Z^\prime$ is a smooth
		rational curve, and then as above $Z^\prime$ is not a $-1$ curve therefore it is a
		$-2$-curve, or
		\item $\omega_{Z^\prime}=\cO_{Z^\prime}$ and $Z^{\prime\, 2}=Z^\prime
		\Bbar=0$. It follows that $Z^\prime$ is actually disjoint from $\Bbar$ and
		$\cO_{Z^\prime}(Z^\prime) = \cO_{Z^\prime}$.
		The homomorphism
		\[
		H^0(X, Z^\prime) \to H^0\left(Z^\prime, \cO_{Z^\prime}(Z^\prime) \right)=\CC
		\]
		is surjective, hence actually $h^0(X, Z^\prime)=2$, a
		contradiction. 
	\end{enumerate}
	This means that we must be in case (i) where $Z^\prime$
	is a $-2$-curve. 
\end{proof}

\begin{lem}
	\label{lem:clustery}
	Let $(Y,D)$ be a cluster surface, and
	$Z^\prime \subset Y$ an interior $-2$-curve. Then there is a
	toric model $q\colon (Y,D) \to (X^\prime, B^\prime)$ that contracts
	$Z^\prime$. 
\end{lem}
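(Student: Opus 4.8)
The plan is to exploit the dictionary between torus charts and toric models from ingredient~(i) in the proof of Theorem~\ref{thm:equivalent_defns}: producing a toric model $q\colon (Y,D)\to (X',B')$ that contracts $Z^\prime$ is equivalent to producing a torus chart $j_1\colon \TT=\CC^{\times\,2}\hookrightarrow Y\setminus D$ with $Z^\prime$ disjoint from $j_1(\TT)$, because then $Z^\prime$ lies in the contracted locus $Y\setminus j_1(\TT)$ and is mapped by $q$ to a point of $B^\prime$. I would start from any toric model $q_0\colon (Y,D)\to (X_0,B_0)$, which exists since $(Y,D)$ is a cluster surface, and note that in the favourable case $q_0$ already contracts $Z^\prime$ and there is nothing to prove; this favourable case is modelled on the $(-2)$-curves produced over a smooth boundary point by the non-toric blow-ups in the construction of the cluster surface.

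In general $q_0$ restricts to a birational morphism from $Z^\prime$ onto a curve $C_0:=q_0(Z^\prime)$ that meets the open torus. The idea is then to minimise the non-negative integer
\[
d(q)=-K_{X'}\cdot q(Z^\prime)=B^\prime\cdot q(Z^\prime)
\]
over all toric models $q$ of $(Y,D)$, using that $-K_{X'}=B^\prime$ for the toric pair $(X',B')$. Since a complete curve meeting the torus $X'\setminus B^\prime$ must meet $B^\prime$, one has $d(q)=0$ precisely when $q$ contracts $Z^\prime$. Writing $q$ as a sequence of blow-ups of (possibly infinitely near) boundary points $p$, and combining $K_Y+D\sim 0$, the hypothesis $Z^\prime\cdot D=0$, and adjunction for the smooth rational curve $Z^\prime$, I expect the clean identity $d(q)=\sum_p m_p$, where $m_p$ is the multiplicity of $q(Z^\prime)$ at $p$ and $\sum_p m_p^2=q(Z^\prime)^2+2$; thus $d$ measures exactly how far $Z^\prime$ is from being an exceptional curve of $q$. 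By ingredient~(ii) (Blanc, \cite{MR3080816}), all toric models lie in a single class under mutation, so it suffices to search within this class.

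The hard part will be the reduction step that makes the minimisation bite. Supposing $d(q_1)>0$ at a minimiser $q_1$, so that $C_1=q_1(Z^\prime)$ meets $B_1$ with positive multiplicities, I would produce a mutation --- an elementary transformation of the toric boundary centred at a boundary point at which $C_1$ is sufficiently singular --- that strictly decreases $B^\prime\cdot q(Z^\prime)$, contradicting minimality and forcing $d=0$. Establishing the existence of such a complexity-decreasing mutation is the crux: it is a Cremona/Noether-type reduction transplanted to the cluster setting, and it requires controlling how the multiplicities $m_p$ of the rational curve $C_1$ redistribute under the mutation, using the two constraints $\sum_p m_p=d$ and $\sum_p m_p^2=C_1^2+2$ coming from $Z^\prime$ being a $(-2)$-curve disjoint from $D$ to guarantee that some boundary point carries enough multiplicity to force a strict drop. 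As a fallback, I would instead appeal to the structure theory of the interior exceptional configurations of toric models in the Gross--Hacking--Keel framework, where the mutation (Weyl) action on interior $(-2)$-classes is rich enough that every such class is realised as a contracted curve of some toric model.
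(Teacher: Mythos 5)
Your framework is set up correctly --- the dictionary between torus charts and toric models, the observation that $d(q)=B^\prime\cdot q(Z^\prime)$ vanishes exactly when $q$ contracts $Z^\prime$ (a complete curve meeting the torus must meet $B^\prime$), and the two identities $\sum_p m_p = d(q)$ and $\sum_p m_p^2 = q(Z^\prime)^2+2$, which follow correctly from $K_Y=q^*K_{X^\prime}+\sum_p E_p$, adjunction, and $K_Y\cdot Z^\prime=0$. But the proof does not close: the entire content of the lemma is concentrated in the step you label as ``the crux'' and leave as an expectation, namely that at a minimiser $q_1$ with $d(q_1)>0$ there exists a mutation strictly decreasing $d$. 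Nothing in the two numerical constraints you have forces a boundary point to carry enough multiplicity: unlike the classical Noether inequality (where $\sum m_i$ and $\sum m_i^2$ are both pinned against the degree), here $q(Z^\prime)^2$ is unconstrained, and mutations in the cluster setting are centred only at boundary points and must preserve the toric-model structure, so the multiplicity bookkeeping under an elementary transformation is genuinely delicate. The fallback appeal to the Weyl-group action on interior $(-2)$-classes in the Gross--Hacking--Keel framework is likewise only named, not argued; it is plausible but it is a substitute for the proof, not a proof.

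For comparison, the paper's argument avoids the minimisation over the mutation class altogether: contract $Z^\prime$ to an ordinary double point --- the resulting pair is still log Calabi--Yau, since $Z^\prime$ is interior and $K_Y\cdot Z^\prime=0$, so $K_Y+D$ descends --- and then run an MMP on the contracted pair, analysing the small number of ways the MMP can terminate; the output of this analysis is the toric model $q$, which by construction factors through the node and hence contracts $Z^\prime$. Note also that Blanc's theorem, which your argument leans on to connect all toric models by mutations, is used in the paper only in the proof of Theorem~\ref{thm:equivalent_defns} (to convert a geometric contraction into a chain of mutations of the polynomial), not in Lemma~\ref{lem:clustery} itself. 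If you want to salvage your route, the missing reduction step is where all the work must go; as written, it is a plan rather than a proof.
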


\begin{proof}[Sketch of proof]
	First contract $Z^\prime$ to an interior node and then run a
	MMP. There is a small number of cases to discuss depending on how
	the MMP terminates.
\end{proof}

\section{Conjecture~B}
\label{sec:intr-mirr-symm}
In this section we state a new conjecture --- Conjecture~B --- which
implies, see Remark~\ref{rem:conjBimpliesmapdefined}, the existence of
the map $\kappa\colon \mathfrak{B}\to \mathfrak{A}$ of
Conjecture~A. In the strong form stated in
Remark~\ref{rem:conjBStrongForm}, together with Conjecture~A,
Conjecture~B asserts that the smoothing components of
$\Defo (V, \divV)$ are themselves smooth, and computes their tangent
space explicitly as a representation of the torus.  We conclude by
explaining how the two Conjectures~A and~B originate in mirror
symmetry. This last discussion is central to how we arrived at the
formulation of the conjectures, but it is not logically necessary for
making sense of their statement. We work with the version of mirror
symmetry put forward in~\cite{MR3415066} and~\cite{gs_intrinsic_slc,
	1909.07649}.\footnote{We thank Paul Hacking for several helpful
	discussions on mirror symmetry and for correcting earlier drafts of
	this section. We are of course responsible for the mistakes that are
	left.}

\medskip

For the remainder of this section fix a lattice polygon $F$ and
denote, as usual, by $V$ the corresponding Gorenstein toric affine
\mbox{$3$-fold} with toric boundary $\partial V$.

In this section we always work with the space $\Defo
(V, \divV)$. Also fix a $0$-mutable polynomial $f \in \CC[\LL]$ with
$\Newt f = F$. 
Conjecture~B associates to $f$ a
$\TT$-equivariant family $(\cU_f, \cD_f) \to \moduli{f}$ of deformations of the pair $(V, \partial V)$.

In the last part of this section we construct from $f$ a $3$-dimensional cluster variety
$(W,D)$, conjecturally the \emph{mirror} of $\moduli{f}$, and hint at an explicit
conjectural construction of the family $\cU_f \to \moduli{f}$ from
the degree-$0$ quantum log cohomology of $(W, D)$.

\medskip

Denote by $\sigma \subset N_\RR$ the cone over $F$ at height $1$.
As
usual, $\usp \in M = \Hom_\ZZ (N,\ZZ)$ denotes the Gorenstein degree,
so $F=\sigma \cap \LL$ where $\LL=(u=1)$.

\subsection{Statement of Conjecture~B}
\label{sec:conjectures}

As in Sec.~\ref{sec:rigid-maxim-mutable}, denote by $\seeds(f)$ the set
of mutation data of $f$. Recall that an element of $\seeds(f)$ is a
pair $(\varphi, h)$ consisting of an affine function
$\varphi \in \Aff (\LL, \ZZ)$ and a Laurent polynomial
$h\in \CC[\Ker \varphi_0]$. Using the restriction isomorphism
$M \simeq \Aff(\LL, \ZZ)$, when it
suits us we view a mutation datum $(\varphi, h)$ as a pair of an element $\varphi \in M$ and a
polynomial $h\in \CC[\LL_0\cap \Ker \varphi] \subseteq \CC[N]$.

The most useful mutation data are those where $\varphi$ is strictly
negative somewhere on $F$, and then the minimum of $\varphi$ on $F$ is
achieved on an edge $E\leq F$.\footnote{Recall that in
	Definition~\ref{dfn:mutation} we explicitly assume that $\varphi$ is not constant on $F$.} In the
present discussion we want to focus on these mutation data:
\[
\seeds_{-}(f)= \left\{ (\varphi, h)\in \seeds(f) \mid \text{for some edge $E\leq
	F$, $\varphi|_E$ is constant and $<0$} \right\}.
\]

We want to define a seed $\widetilde{\seeds}(f)$ on $N$, that is, a set of
pairs $(\varphi,h)$ of a character $\varphi \in M$ and a Laurent
polynomial $h\in \CC[\Ker \varphi ]$. The seed we want is
\[
\widetilde{\seeds}(f)=\seeds_{-}(f)
\cup \Bigl\{ (-ku,h)\ \Big\vert\ \text{$h$ is a prime factor of $f$ of multiplicity
	$k$}\Bigr\}.
\]

\begin{conB}
	In this statement, if $U$ is a representation of the torus $\TT = \Spec \CC[M]$ and $m\in M$
	is a character of $\TT$, we denote by $U(m)$ the direct summand of $U$
	on which $\TT$ acts with pure weight $m$. 
	
	Let $F\subset \LL$ be a lattice polygon, $V$ the
	corresponding Gorenstein toric affine $3$-fold, and $f\in \CC[\LL]$
	a $0$-mutable polynomial with $\Newt f = F$. 
	For every integer $k\geq 1$, denote by $n_k$ the number of prime
	factors of $f$ of multiplicity $\geq k$.
	
	Then there is a $\TT$-invariant submanifold $\moduli{f} \subset \Defo (V, \divV)$
	such that
	\[
	\dim T_{0} \moduli{f} (m) =
	\begin{cases}
	1 & \; \text{if $m \not \in \langle-u\rangle_+$ and $\exists$ $(\varphi, h)\in
		\widetilde{\seeds}(f)$ such that $m=\varphi$},\\
	n_k & \; \text{if $m=-ku$ for some integer $k \geq 1$},\\
	0 & \; \text{otherwise},
	\end{cases}
	\]
	and the general fibre of the
	family over $\moduli{f}$ is a pair consisting of a smooth variety and a smooth divisor. 
\end{conB}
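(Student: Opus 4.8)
The plan is to split the statement into two tasks: (A) exhibit a $\TT$-invariant subspace of $T_0\Defo(V,\divV)$ whose $M$-graded dimensions are those predicted, and (B) show that this subspace is unobstructed, so that it integrates to a smooth submanifold $\moduli{f}$ whose general fibre is a smooth pair. Throughout I would work with the pair $(V,\divV)$, as the statement demands, but pass freely between $\Defo V$ and $\Defo(V,\divV)$ using the smooth $\TT$-equivariant forgetful map of Remark~\ref{rem:deformations_variety_or_pair} whenever it is convenient to borrow Altmann's computations, which are written for $V$.

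For (A) the main input is the explicit description of $T^1_V$ as an $M$-graded $\TT$-representation in \cite{altmann_one_parameter}, together with the description of the finite-length module $T^2_V$ in \cite{altmann_andre_quillen_cohomology} recalled in Lemma~\ref{lem t2}. Under the restriction isomorphism $M\simeq\Aff(\LL,\ZZ)$, each graded piece is governed by the way a character slices the polygon $F$ and its edges, and I would match the two kinds of weights in the statement with two kinds of geometry. First, for a weight $m=\varphi$ with $(\varphi,h)\in\seeds_{-}(f)$, the function $\varphi$ attains a strictly negative minimum along an edge $E\leq F$ and $h\in\CC[\Ker\varphi_0]$ records a mutation direction; I expect this to contribute exactly a $1$-dimensional piece, the rigidity condition $L(\seeds(f))=\CC f$ of Theorem~\ref{thm:equivalent_defns}(2) being what pins the deformation down to one dimension in that degree. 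Second, the weights $m=-ku$ are the directions smoothing the transverse $A_\star$-singularities of $V$ along the $1$-strata over the edges: by the boundary-term Lemma~\ref{lem:easy_properties}(2), $f$ restricts on each edge to a pure binomial power, and I would argue that each prime factor of $f$ of multiplicity $\geq k$, viewed as a $-2$-curve in $Z^\prime\subset X$ (Theorem~\ref{thm:equivalent_defns}(3)), contributes one smoothing direction in degree $-ku$, so that these reassemble into $\dim T_0\moduli{f}(-ku)=n_k$. Reconciling this prime-factor count with the edge-by-edge $A_\star$ data is the delicate bookkeeping of (A).

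For (B) I would construct the family directly, not merely its tangent space, following the recipe of Section~\ref{sec:intr-mirr-symm}: build the toric $3$-fold $\Wb$, its toric blowup $\Wbar$, and the mirror cluster variety $(W,D)$ from $f$, and then use the degree-$0$ quantum log cohomology of $(W,D)$ in the sense of \cite{gs_intrinsic_slc,1909.07649,MR3415066} to produce a $\TT$-equivariant family $(\cU_f,\cD_f)\to\moduli{f}$ of deformations of $(V,\divV)$. On this route the smoothness of $\moduli{f}$ is built in: the seed $\widetilde{\seeds}(f)$ supplies exactly the coordinates on the base, and the structure constants coming from the mirror are the obstruction-free directions. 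Equivalently, and more hands-on, one can attempt to integrate the candidate subspace of (A) directly by writing down explicit equations and checking order by order that the obstruction map into the finite-dimensional module $T^2_V$ vanishes along $\moduli{f}$; the finite dimensionality of $T^2_V$ makes this a finite problem in each degree.

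The hard part is precisely this last step: proving unobstructedness of $\moduli{f}$ and the smoothness of the general fibre. Although $T^2_V$ is finite-dimensional with explicit $\TT$-representation structure, so that the obstruction is finite in each weight, controlling it in all weights simultaneously is where the real difficulty lies — this is exactly why in Section~\ref{sec:evidence-matej} we retreat to explicit computation in a single example. For the smoothness of the general fibre I would try to use $0$-mutability in its most literal form: the chain $f\mapsto f_1\mapsto\cdots\mapsto\mathbf{1}$ of Section~\ref{sec:def-of-0-mutable} should translate, through the toric-model interpretation of Theorem~\ref{thm:equivalent_defns}, into a degeneration of the general fibre to a smooth cluster variety, so that the simultaneous smoothing of all the $A_\star$-strata and of the boundary $\divV$ is controlled by the configuration of the $-2$-curves $Z^\prime\subset X$. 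Turning this combinatorial mutation sequence into an actual simultaneous smoothing over a provably smooth base is the crux, and is the point at which we are currently unable to complete the argument.
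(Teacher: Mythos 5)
There is a genuine gap, and it is not a local one: the statement you are addressing is Conjecture~B of the paper, and the paper does not prove it --- it is presented as an open conjecture, with the authors saying explicitly that they are some distance away from being able to prove it. What the paper offers in its place is evidence: the isolated-singularity case (polygons with unit edges), where Altmann's description of the components of $\Defo V$ via Minkowski decompositions settles the matter; the lci case of Nakajima polygons, where unobstructedness gives a unique smoothing component; and the explicit Macaulay2 computation of $\Defo V$ for the quadrilateral of Example~\ref{ex:quadrilateral}, carried out in Section~\ref{sec:evidence-matej}. Your proposal, by contrast, is a research plan rather than a proof, and you concede as much in your final sentence.

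Concretely, every step that would need to be carried out is left unexecuted. In part (A) you never establish the graded dimension formula: you do not show that each weight $\varphi$ occurring in $\widetilde{\seeds}(f)$ contributes a one-dimensional tangent direction to a submanifold which, at that stage of your argument, has not yet been constructed; and the identification of the multiplicity-$\geq k$ prime factors of $f$ with smoothing directions of weight $-ku$ is named as ``delicate bookkeeping'' and then skipped. In part (B), the construction of $(\cU_f,\cD_f)\to\moduli{f}$ from the degree-$0$ quantum log cohomology of the mirror $(W,D)$ is exactly the paper's own Section~\ref{sec:mirr-symm-interpr}, where it is offered only as a suggestion (``mirror symmetry suggests --- modulo issues with infinite-dimensionality''); it is not a theorem that this ring carries the required filtration, that the Rees construction produces a flat family, or that the resulting family deforms $(V,\divV)$. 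Unobstructedness of $\moduli{f}$ and smoothness of the general fibre --- which are the actual content of the conjecture, not technical afterthoughts --- are precisely the two points you flag as ones you cannot do. So the proposal reproduces the paper's heuristic framework without adding a proof of any component of the statement. If you want to make genuine progress, the realistic targets are the ones the paper itself pursues: verify the conjecture in concrete families, for instance via Altmann's one-parameter toric deformations or via Ilten's \texttt{VersalDeformations} package in codimension $\geq 5$, rather than attempt the general statement by the mirror-symmetry route, which at present is a source of predictions and not of proofs.
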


\begin{rem}
	\label{rem:conjBimpliesmapdefined}
	If Conjecture~B holds then by openness of versality a general point
	of $\moduli{f}$ lies in a unique component of $\Defo (V, \divV)$ and
	this gives the map $\kappa\colon \mathfrak{B}\to \mathfrak{A}$ in
	the statement of Conjecture~A (see also
	Remark~\ref{rem:deformations_variety_or_pair}).
\end{rem}

\begin{rem}[Strong form of Conjecture~B]
	\label{rem:conjBStrongForm}
	A strong form of Conjecture~B states that the families
	$\moduli{f}$ are precisely the smoothing components of
	$\Defo (V,\divV)$.
\end{rem}

\begin{exa}[Example~\ref{ex:quadrilateral}
	continued] \label{ex:quadrilateral_conjecture} Consider the quadrilateral $F$ in $\LL = \ZZ^2$ defined in \eqref{eq:quadrilateral} and the three $0$-mutable polynomials $\alpha$, $\beta$ and $\gamma$ with Newton polytope $F$.
	Set $N = \LL \oplus \ZZ = \ZZ^3$ and consider the following linear functionals in $M = \Hom_\ZZ(N,\ZZ) = \ZZ^3$:
	\begin{align*}
		-m^1_{2,1} &= (0,1,-1)   &  -m^2_{2,1} &= (1,0,-1) \\
		-m^1_{3,1} &= (0,1,-2)    &  -m^2_{3,1} &= (1,0,-2) \\
		-m^1_{3,2} &= (0,2,-1)    &  -m^2_{3,2} &= (2,0,-1)
	\end{align*}
	and $-u = (0,0,-1)$.
	The names $-m^1_{2,1}$, $-m^1_{3,1}$ and $-m^1_{3,2}$ are compatible with the affine-linear functions considered in Example~\ref{ex:quadrilateral} via the restriction isomorphism $M \simeq \Aff(\LL, \ZZ)$.
	Set $x = x^{(1,0,0)} \in \CC[N]$ and $y = x^{(0,1,0)} \in \CC[N]$. Then we have:
	\begin{gather*}
		\widetilde{\seeds}(\alpha) \supseteq \left\{ (-u, 1+x+2y + y^2) , (-u, 1+y+2x+x^2)  \right\}, \\
		\widetilde{\seeds}(\alpha) \supseteq \left\{ (-m^1_{2,1}, 1+x), (-m^1_{3,2}, 1+x), (-m^2_{2,1}, 1+y), (-m^2_{3,2}, 1+y) \right\}, \\
		\widetilde{\seeds}(\beta) \supseteq \left\{(-u, \beta), (-m^1_{2,1}, 1+x), (-m^1_{3,1}, 1+x), (-m^1_{3,2}, 1+x) \right\}, \\
		\widetilde{\seeds}(\gamma) \supseteq \left\{(-u, \gamma), (-m^2_{2,1}, 1+y), (-m^2_{3,1}, 1+y), (-m^2_{3,2}, 1+y) \right\}.
	\end{gather*}
	
	Let $V$ be the Gorenstein toric affine $3$-fold associated to $F$.
	Conjecture~B states that there are three submanifolds $\moduli{\alpha}$, $\moduli{\beta}$ and $\moduli{\gamma}$ of $\Defo (V, \partial V)$ such that the dimensions of $T_0 \moduli{\alpha}(m)$, $T_0 \moduli{\beta}(m)$ and $T_0 \moduli{\gamma}(m)$ for $m \in \{-u, -m^1_{2,1}, -m^1_{3,1}, -m^1_{3,2}, -m^2_{2,1}, -m^2_{3,1}, -m^2_{3,2} \}$ are written down in the table below. 
	\[
	\begin{array}{cccccccc}
	& -u & -m^1_{2,1} & -m^1_{3,1} & -m^1_{3,2} & -m^2_{2,1} & -m^2_{3,1} & -m^2_{3,2} \\
	\hline
	\dim T_0 \moduli{\alpha}(m)     & 2 & 1 & 0 & 1 & 1 & 0 & 1 \\
	\dim T_0 \moduli{\beta}(m)       & 1 & 1 & 1 & 1 & 0 & 0 & 0 \\
	\dim T_0 \moduli{\gamma}(m)  & 1 & 0 & 0 & 0 & 1 & 1 & 1 
	\end{array}
	\]
\end{exa}

\subsection{Mirror symmetry interpretation}
\label{sec:mirr-symm-interpr}

Denote by $\cU_f \to \moduli{f}$ the deformation family of $V$ induced by the composition
$\moduli{f} \into \Defo(V, \divV) \twoheadrightarrow \Defo V$. We sketch a construction of
$\cU_f$ in the spirit of intrinsic mirror symmetry
\cite{gs_intrinsic_slc, 1909.07649}.

\medskip 

Let $\sigma^\vee \subset M_\RR$ be the dual cone, and let
$s_1, \dots, s_r$ be the primitive generators of the rays of
$\sigma^\vee$. Denote by $\Wb$ the toric variety --- for the dual
torus $\TT_M=\Spec \CC[N]$ --- constructed from the fan consisting of
the cones $\{0\}$, $\langle -u\rangle_+$, the $\langle s_j\rangle_+$, and the
two-dimensional cones
\[
\langle -u, s_j\rangle_+
\]
(for $j=1, \dots, r$), and let $\Db \subset \Wb$ be the toric
boundary. 

\medskip

Now, the set of edges $E\leq F$ is in $1$-to-$1$ correspondence with
the set $\{ s_1, \dots, s_r \}$, where $E$
corresponds to $s_j$ if $s_j|_E=0$. If
$(\varphi, h) \in \widetilde{\seeds}(f)$ and
$\varphi \not \in \langle -u\rangle_+$, then there is a unique $j$
such that $\varphi$ is in the cone $\langle -u, s_j\rangle_+$ spanned
by $-u$ and $s_j$.

Let
\[(\Wbar, \Dbar) \longrightarrow (\Wb, \Db)\]
be the toric variety obtained by
adding the rays $\langle \varphi \rangle_+ \subset M_\RR$ whenever
$(\varphi, h)\in \widetilde{\seeds}(f)$, and (infinitely many)
two-dimensional cones subdividing the cones
$\langle -u, s_j\rangle_+$. Note that $\Wbar$ is not quasi-compact and
not proper.

\medskip

Finally, we construct a projective morphism \[ (W, D) \longrightarrow (\Wbar,
\Dbar)\] by a sequence of blowups. 

In what follows for all $(\varphi, h)\in \widetilde{\seeds}(f)$ we
denote by $D_{\langle \varphi \rangle_+}\subset \Wbar$ the
corresponding boundary component, and set 
\[
Z_h=(h=0)\subset D_{\langle \varphi \rangle_+}.
\]

The following simple remarks will be helpful in describing the construction.
\begin{enumerate}
	\item[(a)] For all positive integers $k$, $(k\varphi, h)\in \seeds_{-}(f)$
	if and  only if $(\varphi, h^k)\in \seeds_{-}(f)$.
	\item[(b)] By construction, if $(\varphi, h)\in \seeds_{-}(f)$, then one has $h=(1+x^e)^k$
	(up to translation) for some positive integer $k$, 
	where $e\in M$ is a primitive lattice vector along the edge $E\leq F$ where
	$\varphi$ achieves its minimum.
\end{enumerate}

Let $R\subset M_\RR$ be a ray of the fan of $\Wbar$ other than
$\langle -u\rangle_+$, and let $\varphi
\in M$ be the primitive generator of $R$. It follows from the
remarks just made that there is a largest positive integer $k_R$
such that $(k_R \varphi, 1+x^e)\in \seeds_{-}(f)$.

\medskip

Our mirror $W$ is obtained from $\Wbar$ by:
\begin{enumerate}
	\item First, as $R$ runs through all the rays of the fan of $\Wbar$
	other than $\langle-u\rangle_+$ in some order, blow up $k_R$ times
	above $(1+x^e=0)$ in the proper transform of $D_R$. It can be seen, and it
	is a nontrivial fact, that after doing all these blowups the $Z_h$
	in the proper transforms of the $D_{\langle -u\rangle_+}$ are
	smooth;
	\item Subsequently, if $f=\prod h^{k(h)}$ where the $h$ are
	irreducible, blow up in any order $k(h)$ times above $Z_h$ in the
	proper transform of $D_{\langle -u\rangle_+}$.
\end{enumerate}

The resulting CY pair $(W, D)$ is log smooth, because we have blown up
a sequence of smooth centres. This $(W, D)$ is the \emph{mirror} of
$\moduli{f}$.

One can see that the Mori cone $\NE(W/\Wbar)$ is simplicial, and hence
there is an identification:
\[
\moduli{f} = \Spec \CC[\NE(W/\Wbar)]
\]
Mirror symmetry suggests --- modulo issues with
infinite-dimensionality ---
that the ring $QH^0_{\log} (W,D;\CC[\NE(W/\Wbar)])$ has a natural filtration and that
one recovers the universal family of pairs $(\cU_f, \cD_f)\to \cM_f$ from this ring out of the
Rees construction.

\begin{rem}
	In the context of Conjecture~B, it would be very nice to work out an
	interpretation of the coefficients of the $0$-mutable polynomial $f$
	as counting certain holomorphic disks on the general fibre of the
	family $\cU_f\to \moduli{f}$.
\end{rem}

\section{Evidence}
\label{sec:evidence-1}

We have already remarked that the variety $V$ of Example~\ref{exa:conePP123} confirms Conjecture~A. Here we collect some further evidence.
Section~\ref{sec:evidence-matej} is a study of $\Defo V$ where $V$ is the variety of Example~\ref{ex:quadrilateral} and Example~\ref{ex:quadrilateral_conjecture}.

\subsection{Isolated singularities}

Here we fix a lattice polygon $F$ with unit edges, i.e.\ edges with lattice length $1$.
Let $V$ be the Gorenstein toric affine \mbox{$3$-fold} associated to $F$; we have that $V$ has an isolated singularity.

Altmann~\cite{altmann_versal_deformation} proved that there is a 1-to-1 correspondence between the irreducible components of $\Defo V$ and the maximal Minkowski decompositions of $F$.
This restricts to a 1-to-1 correspondence between the smoothing components of $\Defo V$ and the Minkowski decompositions of $F$ with summands that are either unit segments or standard triangles. Here a standard triangle is a lattice triangle that is $\ZZ^2 \rtimes \mathrm{GL}_2(\ZZ)$-equivalent to $\hull{(0,0),(1,0),(0,1)}$.

On a polygon $F$ with unit edges, the $0$-mutable polynomials are exactly those that are associated to the Minkowski decompositions of $F$ into unit segments and standard triangles. This confirms Conjecture~A.

\subsection{Local complete intersections}

Nakajima \cite{nakajima} has characterised the affine toric varieties that are local complete intersection (lci for short). These are Gorenstein toric affine varieties associated to certain lattice polytopes called Nakajima polytopes. We refer the reader to \cite[Lemma~2.7]{toric_lci_are_resolvable} for an inductive characterisation of Nakajima polytopes. From this characterisation it is very easy to  see that every Nakajima polygon is affine equivalent to 
\begin{equation} \label{eq:nakajima}
F_{a,b,c} = \hull{(0,0),(a,0),(0,b),(a,b+ac)}
\end{equation}
in the lattice $\ZZ^2$, for some non-negative integers $a,b,c$ such that $a \geq 1$ and $ b + c \geq 1$.
It is easy to show that the Gorenstein toric affine \mbox{$3$-fold} associated to the polygon $F_{a,b,c}$ is
\[
V_{a,b,c} = \Spec \CC[x_1,x_2,x_3,x_4,x_5]/(x_1 x_2 - x_4^c x_5^b, \ x_3 x_4 - x_5^a).
\]

There is a unique $0$-mutable polynomial on $F_{a,b,c}$: this is associated to the unique Minkowski decomposition of $F_{a,b,c}$ into $a$ copies of the triangle $F_{1,0,c} = \hull{(0,0),(1,0),(0,c)}$ and $b$ copies of the segment $\hull{(0,0),(0,1)}$. On the other hand, as $V_{a,b,c}$ is lci, we have that $V_{a,b,c}$ is unobstructed and smoothable, therefore there is a unique smoothing component in the miniversal deformation space of $V_{a,b,c}$. This confirms Conjecture~A.

\section{A worked example}
\addtocontents{toc}{\setcounter{tocdepth}{1}} 
\label{sec:evidence-matej}

We explicitly compute the smoothing components of the miniversal
deformation space of the Gorenstein toric affine \mbox{$3$-fold} $V$
associated to the polygon $F$ of Example~\ref{ex:quadrilateral}
(continued in Example~\ref{ex:quadrilateral_conjecture}).  We saw that
there exist exactly three $0$-mutable polynomials with Newton polytope
$F$: $\alpha$, $\beta$ and $\gamma$.  We explicitly compute the
miniversal deformation space of $V$ and see that it has three
irreducible components, all of which are smoothing components. This
confirms our conjectures.

\subsection{The equations of $V$}
We consider the quadrilateral
\begin{equation} \label{eq:quadrilateral_last}
F = \hull{\vectortwo{-1}{-1}, \vectortwo{2}{-1}, \vectortwo{1}{1}, \vectortwo{-1}{2}}
\end{equation}
in the lattice $\LL = \ZZ^2$.
We consider the cone $\sigma$ obtained by placing $F$ at height $1$,
i.e.\ $\sigma$ is the cone generated by
\begin{equation*}
	a_1=\vectorthree{-1}{-1}{1}  \qquad a_2=\vectorthree{2}{-1}{1}  \qquad a_3=\vectorthree{1}{1}{1}  \qquad a_4=\vectorthree{-1}{2}{1}
\end{equation*}
in the lattice $N = \LL \oplus \ZZ = \ZZ^3$, and the corresponding
Gorenstein toric affine \mbox{$3$-fold}
$V = \Spec \CC[\sigma^\vee \cap M]$, where $\sigma^\vee$ is the dual cone of $\sigma$ in the dual lattice
$M = \Hom_\ZZ(N,\ZZ) \simeq \ZZ^3$.

The Gorenstein degree is
\[
\usp = (0,0,1) \in M.
\]
The primitive generators of the rays of the dual cone $\sigma^\vee \subseteq M_\RR$ are the vectors
\begin{align*}
	s_1 = (0,1,1), \quad s_2 = (1,0,1), \quad s_3 = (-1,2,3), \quad s_4 = (-2,-1,3)
\end{align*}
which are orthogonal to the $4$ edges of $F$.
The Hilbert basis of the monoid $\sigma^\vee \cap M$ is the set of the vectors
\begin{align*}
	\usp, \ s_1, \ z_2 = (-1,0,2),\  s_4, \ z_3 = (-1,-1,2), \ s_3, \ z_4 = (0,-1,2), \ s_2.
\end{align*}
Notice that these are the Gorenstein degree $\usp$ and
certain lattice vectors on the boundary
of $\sigma^\vee$.
The elements of the Hilbert basis of $\sigma^\vee \cap M$ are depicted in Figure~\ref{fig:slices}.
\begin{figure}
	\centering
	\def\svgwidth{\textwidth}
\begingroup%
  \makeatletter%
  \providecommand\color[2][]{%
    \errmessage{(Inkscape) Color is used for the text in Inkscape, but the package 'color.sty' is not loaded}%
    \renewcommand\color[2][]{}%
  }%
  \providecommand\transparent[1]{%
    \errmessage{(Inkscape) Transparency is used (non-zero) for the text in Inkscape, but the package 'transparent.sty' is not loaded}%
    \renewcommand\transparent[1]{}%
  }%
  \providecommand\rotatebox[2]{#2}%
  \newcommand*\fsize{\dimexpr\f@size pt\relax}%
  \newcommand*\lineheight[1]{\fontsize{\fsize}{#1\fsize}\selectfont}%
  \ifx\svgwidth\undefined%
    \setlength{\unitlength}{726.36021683bp}%
    \ifx\svgscale\undefined%
      \relax%
    \else%
      \setlength{\unitlength}{\unitlength * \real{\svgscale}}%
    \fi%
  \else%
    \setlength{\unitlength}{\svgwidth}%
  \fi%
  \global\let\svgwidth\undefined%
  \global\let\svgscale\undefined%
  \makeatother%
  \begin{picture}(1,0.37832466)%
    \lineheight{1}%
    \setlength\tabcolsep{0pt}%
    \put(0,0){\includegraphics[width=\unitlength,page=1]{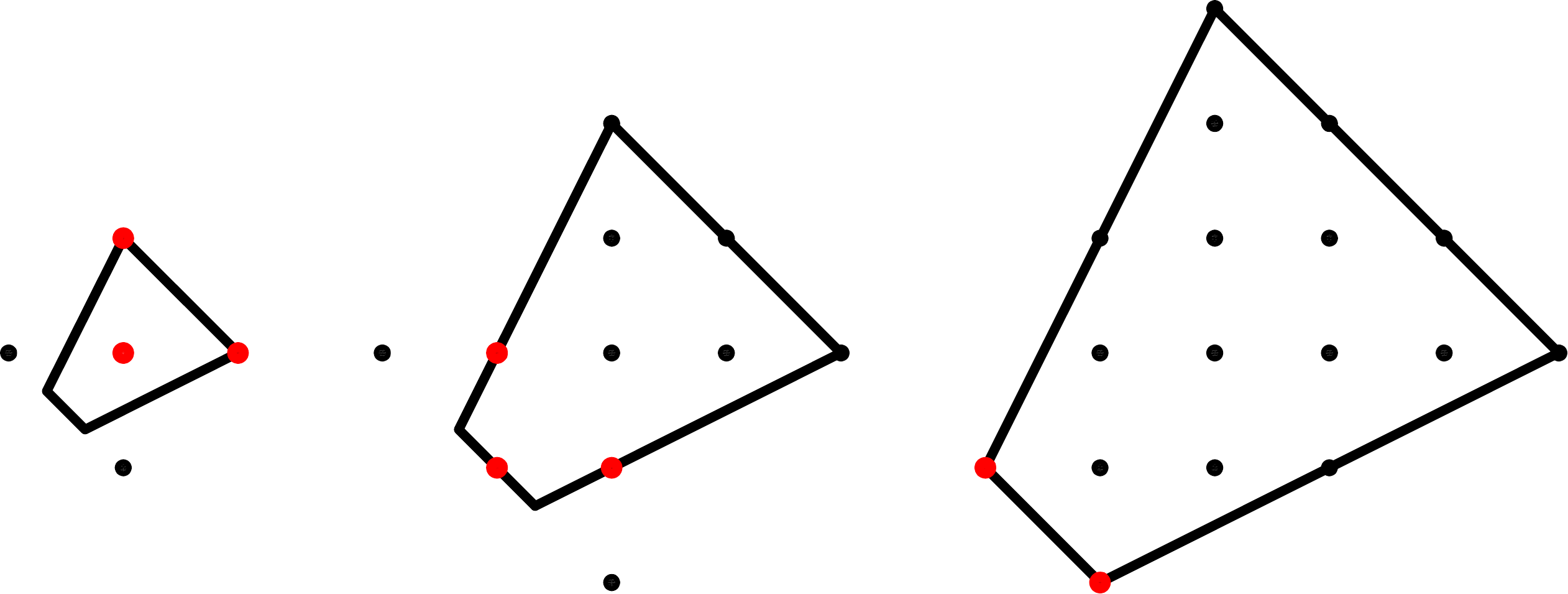}}%
    \put(0.085,0.16){\makebox(0,0)[lt]{\lineheight{1.25}\smash{\begin{tabular}[t]{l}$u$\end{tabular}}}}%
    \put(0.16,0.16){\makebox(0,0)[lt]{\lineheight{1.25}\smash{\begin{tabular}[t]{l}$s_2$\end{tabular}}}}%
    \put(0.085,0.235){\makebox(0,0)[lt]{\lineheight{1.25}\smash{\begin{tabular}[t]{l}$s_1$\end{tabular}}}}%
    \put(0.285,0.16){\makebox(0,0)[lt]{\lineheight{1.25}\smash{\begin{tabular}[t]{l}$z_2$\end{tabular}}}}%
    \put(0.285,0.065){\makebox(0,0)[lt]{\lineheight{1.25}\smash{\begin{tabular}[t]{l}$z_3$\end{tabular}}}}%
    \put(0.4,0.065){\makebox(0,0)[lt]{\lineheight{1.25}\smash{\begin{tabular}[t]{l}$z_4$\end{tabular}}}}%
    \put(0.6,0.065){\makebox(0,0)[lt]{\lineheight{1.25}\smash{\begin{tabular}[t]{l}$s_4$\end{tabular}}}}%
    \put(0.69,-0.017){\makebox(0,0)[lt]{\lineheight{1.25}\smash{\begin{tabular}[t]{l}$s_3$\end{tabular}}}}%
  \end{picture}%
\endgroup%

	\bigskip
	\caption{The intersections of the cone $\sigma^\vee$ with the planes $\RR^2 \times \{1\}$, $\RR^2 \times \{2\}$ and $\RR^2 \times \{3\}$ in $M_\RR = \RR^3$}
	\label{fig:slices}
\end{figure}

The elements of the Hilbert basis of $\sigma^\vee \cap M$ give a closed embedding of $V$ inside $\AA^8$ such that the ideal is generated by binomial equations. 
By using rolling factors formats (see \cite{stevens_rolling} and \cite[\S12]{stevens_book}), one can\footnote{We are obliged to the referee for suggesting these equations to us.} see that these equations are:
\begin{gather*}
	\mathrm{rank}
	\begin{pmatrix}
		x_{s_1} & x_{z_2} & x_\usp & x_{s_2} & x_{z_4}\\
		x_{z_2} & x_{s_4} & x_{z_3} & x_{z_4} & x_{s_3}
	\end{pmatrix}
	\leq 1, \\
	x_{s_4}x_{s_3}-x^3_{z_3}=0 \qquad \qquad x_{z_2} x_{s_3} - x_{z_3}^2 x_u = 0, \\
	x_{z_2}x_{z_4}-x_{z_3} x_\usp^2=0 \qquad \qquad  x_{s_1} x_{z_4} - x_\usp^3=0.
\end{gather*}

The singular locus of $V$ has two irreducible components of dimension
$1$: $V$ has generically transverse $A_2$-singularities along each of these.

\subsection{The tangent space}

We consider the tangent space to the deformation functor of $V$, i.e.\ $T^1_V = \Ext^1_{\cO_V}(\Omega_V^1, \cO_V)$.
This is a $\CC$-vector space with an $M$-grading.
For every $m \in M$ we denote by $T^1_V(-m)$ the graded component of $T^1_V$ of degree $-m$.

\begin{lem} \label{lem:t1}
	We define $\degs := \{ (p,q) \in \ZZ^2 \mid 2 \leq p \leq 3, \ q \geq 1 \}$.
	For all $p, q \in \ZZ$ we set $m^1_{p,q} := p \usp - q s_1$ and
	$
	m^2_{p,q} := p \usp - q s_2$.
	
	Then
	\[
	\dim T^1_V(-m) =
	\begin{cases}
	1 &\text{if } m = \usp, \\
	1 &\text{if } m = m^1_{p,q} \text{ with } (p,q) \in \degs, \\
	1 &\text{if } m = m^2_{p,q} \text{ with } (p,q) \in \degs, \\
	0 &\text{otherwise.}
	\end{cases}
	\]
\end{lem}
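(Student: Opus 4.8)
The plan is to invoke Altmann's explicit $M$-graded description of the first cotangent cohomology of an affine toric variety \cite[Theorem~4.4]{altmann_one_parameter} and to carry out the resulting enumeration for the cone $\sigma$ over $F$. For a fixed degree $R\in M$, that description expresses $\dim T^1_V(-R)$ as the dimension of a vector space built entirely from the combinatorics of the restriction $\bar R$ of $R$ to the affine hyperplane $\LL=(\usp=1)$ relative to the polygon $F$: one records the values of $\bar R$ at the vertices of $F$, the edges on which $\bar R$ attains prescribed values, and a small complex attached to this data. Thus the whole computation reduces to an a priori infinite family of elementary affine-geometry checks on $F$, one for each lattice degree $R$.

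First I would isolate the geometric source of the nonzero degrees. The two edges of $F$ orthogonal to $s_1$ and $s_2$ have lattice length $3$, while the other two edges have lattice length $1$; correspondingly the singular locus of $V$ consists of the two toric curves recorded above, along which $V$ has transverse $A_2$-singularities. Away from the Gorenstein degree, I expect every nonzero graded piece of $T^1_V$ to come from deforming one of these two transverse $A_2$'s: localizing $T^1_V$ at the generic point of each singular curve identifies the relevant graded pieces with those of the versal $T^1$ of an $A_2$-singularity $\{xy=z^3\}$, which is $2$-dimensional. The two generators account for the range $p\in\{2,3\}$ in $\degs$, while the $\ZZ$-grading along the one-parameter subgroup of the singular curve accounts for the parameter $q\geq 1$; this produces exactly the two families of degrees $-m^1_{p,q}$ and $-m^2_{p,q}$. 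The remaining degree is the Gorenstein degree $\usp$ itself, where $\bar R\equiv 1$ on $F$ and Altmann's formula returns the Minkowski/smoothing direction of the whole polygon; here one checks directly that the dimension is $1$.

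The hard part will be the two complementary sharp statements: that $\dim T^1_V(-R)=0$ for every degree $R$ outside the displayed list, and that the dimension is exactly $1$ (not larger) on each family and at $\usp$. Both require the precise form of Altmann's complex together with a careful case analysis on the sign pattern of $\bar R$ at the vertices and edges of $F$, and one must organize this analysis so that it simultaneously disposes of the infinitely many candidate degrees with $q\geq 1$. The finiteness input from the earlier discussion --- namely that $V$ is unobstructed in codimension two and that $T^2_V$ is a finite-length module supported at the toric $0$-stratum \cite[Section~5]{altmann_andre_quillen_cohomology} --- guarantees that the localization argument of the previous paragraph is valid at the generic points of the singular curves, so that only the Gorenstein degree can contribute beyond the two $A_2$-families. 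A more computational alternative, which I would keep in reserve as a check, is to use the explicit embedding $V\subset \AA^8$ and its binomial equations displayed above together with the techniques of \cite{fil,fil2} to compute $T^1_V$ directly in each degree; this is correct but considerably more laborious than the combinatorial route.
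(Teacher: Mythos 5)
Your proposal takes essentially the same route as the paper: the paper's entire proof is the one-line observation that the lemma is a direct consequence of \cite[Theorem~4.4]{altmann_one_parameter}, which is exactly the reduction you make before adding the (supplementary, not logically required) heuristic about the two transverse $A_2$-curves and the Gorenstein degree. The case analysis you flag as the ``hard part'' is precisely what the paper leaves implicit in the word ``direct,'' so your plan is correct and, if anything, more explicit about what the citation is hiding.
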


\begin{proof}
	This is a direct consequence of \cite[Theorem~4.4]{altmann_one_parameter}.
\end{proof}

Some of the degrees of $T^1_V$ are depicted in Figure~\ref{fig:slab}.

\begin{figure}
	\centering
	\def\svgwidth{4cm}
	\input{Slabx.tex}
	\caption{Some degrees of $T^1_V$ in the plane $\RR \times \{0\} \times \RR \subseteq M_\RR = \RR^3$.}
	\label{fig:slab}
\end{figure}

\medskip

The base of the miniversal deformation of $V$ is the formal completion (or germ) at the origin
of a closed subscheme of the countable-dimensional affine space $T^1_V$.
We denote by $t_m$
the coordinate on the 1-dimensional $\CC$-vector space $T^1_V(-m)$,
when $m = \usp$ or $m \in \{m^1_{p,q}, m^2_{p,q} \}$ with
$(p,q) \in \degs$.
Since we want to understand the structure of $\Defo V$, we want to analyse the equations of $\Defo V \into T^1_V$
in the variables $t_u$, $t_{m^1_{p,q}}$ and $t_{m^2_{p,q}}$ for $(p,q) \in J$.

The first observation is that each homogeneous first order deformation of $V$ is unobstructed as we see in the following two remarks.

\begin{rem} \label{rem:Minkowski_sum_gives_deform} The 1-dimensional
	$\CC$-vector space $T^1_V(-\usp)$ gives a first order deformation of
	$V$, i.e.\ an infinitesimal deformation of $V$ over
	$\CC[t_\usp]/(t_\usp^2)$.  This deformation can be extended to an
	algebraic deformation of $V$ over $\CC[t_u]$ as follows
	(see \cite{altmann_minkowski_sums}).
	
	Consider the unique non-trivial Minkowski decomposition of $F$ (see Figure~\ref{fig:minkowski_dec_F}):
	\begin{equation} \label{eq:Minkowski_deco_of_F}
	F = \hull{\vectortwo{0}{0},\vectortwo{1}{0},\vectortwo{0}{2}} + \hull{\vectortwo{-1}{-1},\vectortwo{1}{-1},\vectortwo{-1}{0}}.
	\end{equation}
	Let $\widetilde{F}$ be the Cayley polytope associated to this Minkowski sum; $\widetilde{F}$ is a $3$-dimensional lattice polytope.
	Let $\tilde{\sigma}$ be the cone over $\widetilde{F}$ at height $1$, i.e.\ $\tilde{\sigma}$ is the $4$-dimensional cone generated by
	\begin{equation*}
		\vectorfour{0}{0}{1}{0}, \vectorfour{1}{0}{1}{0}, \vectorfour{0}{2}{1}{0}, \vectorfour{-1}{-1}{0}{1}, \vectorfour{1}{-1}{0}{1}, \vectorfour{-1}{0}{0}{1}.
	\end{equation*}
	Let $\widetilde{V}$ be the Gorenstein toric affine \mbox{$4$-fold} $\widetilde{V}$ associated to $\tilde{\sigma}$. Consider the difference of the two regular functions on $\widetilde{V}$ associated to the characters $(0,0,1,0)$ and $(0,0,0,1)$; if we consider this regular function on $\widetilde{V}$ as a morphism $\widetilde{V} \to \AA^1$, we obtain the following cartesian diagram
	\begin{equation*}
		\xymatrix{
			V \ar[r] \ar[d] & \widetilde{V} \ar[d] \\
			\{0 \} \ar[r] & \AA^1
		}
	\end{equation*}
	which gives the wanted $1$-parameter deformation of $V$.

	\begin{figure}
		\begin{tikzpicture}[scale=0.4]
		\draw[thick,  color=black]  
		(0,0) -- (3,0) -- (2,2) -- (0,3) -- cycle;
		\fill[thick,  color=black]
		(0,0) circle (3.5pt) (3,0) circle (3.5pt) (2,2) circle (3.5pt)
		(2,0) circle (3.5pt) (1,0) circle (3.5pt) (1,1) circle (3.5pt) (2,1) circle (3.5pt) (1,2) circle (3.5pt) (0,2) circle (3.5pt) (0,1) circle (3.5pt) (0,3) circle (3.5pt);
		\draw[thick,  color=black]
		(3, 0.9) node{$\hspace{2.2em}=\hspace{0.5em}$};
		\end{tikzpicture}
		\begin{tikzpicture}[scale=0.4]
		\draw[thick,  color=black]  
		(0,0) -- (1,0) -- (0,2) -- cycle;
		\fill[thick,  color=black]
		(0,0) circle (3.5pt) (1,0) circle (3.5pt) (0,2) circle (3.5pt) (0,1) circle (3.5pt);
		\draw[thick,  color=black]
		(1.5, 0.9) node{$\hspace{2.2em}+\hspace{0.5em}$};
		\end{tikzpicture}
		\begin{tikzpicture}[scale=0.4]
		\draw[thick,  color=black]  
		(0,0) -- (2,0) -- (0,1) -- cycle;
		\fill[thick,  color=black]
		(0,0) circle (3.5pt) (2,0) circle (3.5pt) (0,1) circle (3.5pt) (0,1) circle (3.5pt) (1,0) circle (3.5pt);
		\end{tikzpicture}
		\caption{The Minkowski decomposition \eqref{eq:Minkowski_deco_of_F} of the quadrilateral $F$ defined in \eqref{eq:quadrilateral_last}}
		\label{fig:minkowski_dec_F}
	\end{figure}
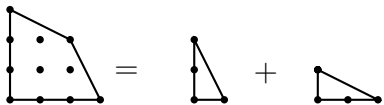
\end{rem}

\begin{rem}
	For every $m \in \{ m^1_{p,q}, m^2_{p,q} \}$ with $(p,q) \in \degs$,
	the first order deformation of $V$ corresponding to
	$T^1_V(-m) \simeq \CC$ can be extended to an algebraic deformation
	of $V$ over $\CC[t_m]$ thanks to
	\cite[Theorem~3.4]{altmann_one_parameter} (see also \cite{mavlyutov,
		petracci_mavlyutov}).
\end{rem}

\subsection{The obstruction space}

We now consider the obstruction space of the deformation functor of $V$:
$T^2_V = \Ext^2_{\cO_V}(\Omega^1_V, \cO_V)$. This is an $M$-graded
$\CC$-vector space. For all $m\in M$ we denote by $T^2_V(-m)$ the
direct summand of degree $-m$.

\begin{lem}\label{lem t2}
	If $m \in \{4\usp-s_1, \ 4\usp-s_2, \ 5\usp-s_1-s_2, \ 6\usp-s_1-s_2, \ 9\usp-2s_1-2s_2\}$, then $\dim T^2_V(-m) = 1$. Otherwise $\dim T^2_V(-m) = 0$.
\end{lem}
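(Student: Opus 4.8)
The plan is to invoke the explicit $M$-graded combinatorial description of the cotangent cohomology $T^2$ of an affine toric variety due to Altmann \cite[Section~5]{altmann_andre_quillen_cohomology}, and then to carry out the resulting finite computation for our specific cone $\sigma$. The conceptually most important first step is to bound the set of degrees that can contribute. As recalled in Remark~\ref{rem:infinite_dimensional}(i), $V$ has transverse $A_2$-singularities in codimension two; these are complete intersections, hence unobstructed, so the sheafification of $T^2$ vanishes away from the torus-fixed point and $T^2_V = \Ext^2_{\cO_V}(\Omega^1_V, \cO_V)$ is a finite-length module supported at the origin. Together with the Gorenstein condition (the generators $s_1,\dots,s_4$ all pair to $1$ against $\usp$), this forces $T^2_V(-m)$ to vanish unless $m$ lies in a bounded region of $M$, so only finitely many degrees $-m$ need to be examined.

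For each such degree $-m$, Altmann's formula presents $T^2_V(-m)$ as the cohomology of an explicit finite complex of $\CC$-vector spaces, built from the poset of faces of $\sigma$ — equivalently, from the edges $E\leq F$, which correspond to the rays $s_1,\dots,s_4$ of $\sigma^\vee$ — together with the linear data recording how $m$ pairs against the relevant cross-section of $\sigma$. I would organise the evaluation by first tabulating the two-dimensional faces of $\sigma$ and their $A_2$-singularity data, and then computing degree by degree the relevant kernel/cokernel; each individual computation is a small piece of linear algebra over $\CC$. The labour can be roughly halved by exploiting the symmetry of $F$: the quadrilateral \eqref{eq:quadrilateral_last} admits the involution $(a,b)\mapsto(b,a)$, which fixes $\usp$ and swaps $s_1\leftrightarrow s_2$ and $s_3\leftrightarrow s_4$. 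The proposed list of degrees is invariant under this involution (it interchanges $4\usp-s_1\leftrightarrow 4\usp-s_2$ and fixes the other three), so it suffices to treat one representative of each orbit and one checks that $\dim T^2_V(-m)$ is a symmetric function.

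The main obstacle is twofold: evaluating the combinatorial complex without error, and — more seriously — certifying \emph{completeness}, i.e.\ that $T^2_V(-m)=0$ for every degree outside the listed five. The support argument bounds the region, but the cohomology governing $T^2$ can be non-zero in a priori unexpected degrees, so the vanishing claims require genuine verification rather than an estimate. As a conceptual guide and partial check I would compare the non-zero degrees against the expected quadratic obstruction pairing $\mathrm{Sym}^2 T^1_V \to T^2_V$: using the degrees of Lemma~\ref{lem:t1}, four of the five are sums of two $T^1$-degrees, for instance $5\usp - s_1 - s_2 = m^1_{2,1} + m^2_{3,1}$ and $6\usp - s_1 - s_2 = m^1_{3,1} + m^2_{3,1}$, while $4\usp - s_1 = \usp + m^1_{3,1}$ (and symmetrically for $s_2$). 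Tellingly, the remaining degree $9\usp - 2s_1 - 2s_2$ is \emph{not} a sum of two $T^1$-degrees — since $2\leq p\leq 3$ in $\degs$ caps the $\usp$-height of any two mixed degrees at $6$ — so it arises only at higher order and is precisely the delicate case the quadratic heuristic misses.

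For this reason I would not rely on the heuristic alone. I would pin down both the five dimensions and the completeness of the list by an independent recomputation of $T^2_V$ directly from the explicit equations of $V$, tracking the $M$-grading through a truncated free resolution, e.g.\ via Ilten's \textsc{Macaulay2} package \cite{ilten_package_article, M2} on which the authors already rely in Section~\ref{sec:evidence-matej}. Agreement between the combinatorial formula and the machine computation would simultaneously confirm the dimensions in the listed degrees and the vanishing everywhere else.
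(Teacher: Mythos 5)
Your proposal takes essentially the same approach as the paper: the paper's entire proof of Lemma~\ref{lem t2} is that it ``is a direct computation using formulae in \cite[Section~5]{altmann_andre_quillen_cohomology}'', which is exactly your primary strategy, and your support/finiteness bound matches the paper's own remark following the lemma (that $T^2_V(-m)=0$ whenever $\langle m,a_i\rangle\leq 0$ for some generator $a_i$). Your auxiliary checks are correct and consistent with the paper --- the symmetry of $F$ swapping $s_1\leftrightarrow s_2$, the cup-product accounting (note the degree $9\usp-2s_1-2s_2$ is realised by the cubic equation $t^2_{m^2_{3,1}}t_{m^1_{3,2}}-t^2_{m^1_{3,1}}t_{m^2_{3,2}}$ of Proposition~\ref{pro:eq}), and the \texttt{cotangentCohomology2} computation that the paper itself runs in Section~\ref{sec:computer}.
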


\begin{proof}
	This is a direct computation using formulae in \cite[Section 5]{altmann_andre_quillen_cohomology}.
\end{proof}

\begin{rem}
	It immediately follows from the computation in \cite[Section
	5]{altmann_andre_quillen_cohomology} that $T^2_V(-m)=0$ if there
	exists $a_i$ such that $\lan m,a_i \ran \leq 0$.
\end{rem}

\subsection{Verifying the conjectures}

Since $\dim T^2_V = 5$, the ideal of the closed embedding $\Defo V \into T^1_V$ has at most $5$ generators. We have the following:

\begin{pro} \label{pro:eq}
	The equations of the closed embedding $\Defo V \into T^1_V$ are
	\begin{align*}
		t_{m^1_{3,1}} t_\usp = 0, \\
		t_{m^2_{3,1}} t_\usp = 0,  \\
		t_{m^1_{3,1}} t_{m^2_{2,1}} + t_{m^1_{2,1}} t_{m^2_{3,1}}= 0,  \\
		t_{m^1_{3,1}}t_{m^2_{3,1}} = 0, \\
		t^2_{m^2_{3,1}} t_{m^1_{3,2}} - t^2_{m^1_{3,1}} t_{m^2_{3,2}} = 0.
	\end{align*}
	Moreover, $\Defo V$ is non-reduced and has exactly 3 irreducible components; their equations inside $T^1_V$ are:
	\begin{enumerate}
		\item $t_{m^1_{3,1}}=t_{m^2_{3,1}}=0$,
		\item $t_{\usp}=t_{m^1_{3,1}}=t_{m^1_{2,1}}=t_{m^1_{3,2}}=0$,
		\item $t_{\usp}=t_{m^2_{3,1}}=t_{m^2_{2,1}}=t_{m^2_{3,2}}=0$.
	\end{enumerate}
	Every irreducible component of $\Defo V$ is smooth and is a smoothing component.
\end{pro}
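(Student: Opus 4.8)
The plan is to compute the miniversal deformation of $V$ explicitly, exploiting the $M$-grading at every step. By Lemma~\ref{lem t2} the obstruction space $T^2_V$ is $5$-dimensional and concentrated in the five degrees $4\usp - s_1$, $4\usp - s_2$, $5\usp - s_1 - s_2$, $6\usp - s_1 - s_2$ and $9\usp - 2s_1 - 2s_2$; hence $\Defo V \hookrightarrow T^1_V$ is cut out by at most five $M$-homogeneous equations, one summand $G_j$ of the Kuranishi map valued in each graded piece of $T^2_V$. Since $\usp, s_1, s_2$ are linearly independent in $M_\RR$, and a monomial $\prod_i t_{m_i}$ contributes to the obstruction in the degree $\sum_i m_i$, the monomials that can occur in a given $G_j$ are exactly the finitely many products whose exponents sum to the degree of the corresponding summand of $T^2_V$ — the finiteness being precisely the phenomenon recorded in Remark~\ref{rem:infinite_dimensional}. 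A short bookkeeping then shows that only the seven variables $t_\usp, t_{m^1_{2,1}}, t_{m^1_{3,1}}, t_{m^1_{3,2}}, t_{m^2_{2,1}}, t_{m^2_{3,1}}, t_{m^2_{3,2}}$ can appear; every other coordinate of $T^1_V$ (from Lemma~\ref{lem:t1}) is an unconstrained direction, so $\Defo V$ is the product of the scheme cut out in these seven variables with an infinite-dimensional affine space, and the whole problem reduces to a finite computation.

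First I would pin down the $G_j$. The four equations in degrees $4\usp - s_1$, $4\usp - s_2$, $5\usp - s_1 - s_2$, $6\usp - s_1 - s_2$ have quadratic leading terms, read off from the primary obstruction, i.e.\ the cup product $T^1_V \otimes T^1_V \to T^2_V$; the only delicacy is that the first two degrees also admit a single cubic monomial ($t_\usp^2 t_{m^1_{2,1}}$, respectively $t_\usp^2 t_{m^2_{2,1}}$), which one must check is absent after normalising coordinates. The fifth equation, in degree $9\usp - 2s_1 - 2s_2$, carries no quadratic monomial at all (two charges cannot reach $u$-weight $9$ since each $t_{m^i_{p,q}}$ has $p\le 3$): its lowest terms are the two cubics $t_{m^2_{3,1}}^2 t_{m^1_{3,2}}$ and $t_{m^1_{3,1}}^2 t_{m^2_{3,2}}$, so obtaining it requires carrying the obstruction calculation to third order (a Massey-product computation) to fix their relative coefficient and to confirm that no higher-order corrections survive. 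I expect this last step to be the main obstacle: it is exactly the tedium that, as explained in the introduction, led us to verify the equations with Ilten's Macaulay2 package \cite{ilten_package_article} rather than by the methods of \cite{fil, fil2}.

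With the five equations in hand the component analysis is elementary. Writing $c_1 = t_{m^1_{3,1}}$ and $c_2 = t_{m^2_{3,1}}$, the relation $c_1 c_2 = 0$ splits the locus into the three cases $c_1 = c_2 = 0$, $c_1 = 0 \neq c_2$, and $c_2 = 0 \neq c_1$; feeding each case back into the remaining equations (for instance $c_2 \neq 0$ forces $t_\usp = t_{m^1_{2,1}} = t_{m^1_{3,2}} = 0$ through the second, third, and fifth equations) yields exactly the three coordinate subspaces (1), (2), (3), and one checks directly that their union is the entire zero-set and that none is contained in another. Each is defined by coordinate equations, hence smooth. Non-reducedness I would exhibit by a single nilpotent: the monomial $t_{m^2_{3,1}} t_{m^1_{3,2}}$ vanishes on all three components, so it lies in the radical, yet it is not in the ideal, because the degree-$2$ part of the ideal is the span of the four quadratic generators and does not contain this monomial.

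It remains to show that each component is a smoothing component, i.e.\ that the general fibre over it is smooth. For this I would substitute a general point of each component into the perturbation of the eight explicit generators of the ideal of $V \subset \AA^8$ and apply the Jacobian criterion to the resulting fibre, checking by a direct rank computation that both transverse $A_2$-strata of $V$ are smoothed out. The deformation along $t_\usp$ is the Minkowski/Cayley family of Remark~\ref{rem:Minkowski_sum_gives_deform}, which already partially smooths $V$. Finally, matching the free directions of the components (1), (2), (3) with the supports of $\widetilde{\seeds}(\alpha)$, $\widetilde{\seeds}(\gamma)$, $\widetilde{\seeds}(\beta)$ from Example~\ref{ex:quadrilateral_conjecture} identifies the three smoothing components with the three $0$-mutable polynomials $\alpha$, $\gamma$, $\beta$, thereby confirming Conjectures~A and~B in this example.
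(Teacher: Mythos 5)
Your strategy is essentially the paper's: the ideal of $\Defo V \into T^1_V$ is generated by at most five $M$-homogeneous equations whose degrees are the five degrees of $T^2_V$ (Lemma~\ref{lem t2}), weight bookkeeping reduces everything to a finite-dimensional germ, that germ is determined with Ilten's \texttt{VersalDeformations} package, and the component analysis is then done by hand. Your downstream analysis is correct and in places more explicit than the paper's: the case split on $t_{m^1_{3,1}}t_{m^2_{3,1}}=0$ reproduces the three components, your nilpotent $t_{m^2_{3,1}}t_{m^1_{3,2}}$ is a clean witness of non-reducedness (the paper just invokes a primary decomposition), and the Jacobian criterion on the perturbed equations is exactly how the paper treats the components $(2)$ and $(3)$ --- though for component $(1)$ the paper argues differently, via the Minkowski/Cayley family of Remark~\ref{rem:Minkowski_sum_gives_deform} and smoothability of transverse $A_2$-singularities, rather than a rank computation.

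There is, however, a genuine gap in your reduction step: it is \emph{not} true that only your seven variables can occur. The correct list, and the one the paper works with, has \emph{nine} variables: yours together with $t_{m^1_{2,2}}$ and $t_{m^2_{2,2}}$ (of degrees $m^i_{2,2}=2\usp-2s_i$); weight considerations only exclude the $t_{m^i_{p,q}}$ with $q\geq 3$. Concretely, the monomials $t_{m^1_{2,2}}t_{m^2_{3,1}}^2t_\usp$ and $t_{m^1_{2,2}}t_{m^2_{2,2}}t_\usp^5$ both have degree $9\usp-2s_1-2s_2$, so they are a priori allowed in the fifth equation; likewise your monomial count within the other degrees is incomplete ($5\usp-s_1-s_2$ admits the cubic $t_{m^1_{2,1}}t_{m^2_{2,1}}t_\usp$, and $6\usp-s_1-s_2$ admits $t_{m^1_{2,1}}t_{m^2_{3,1}}t_\usp$, $t_{m^1_{3,1}}t_{m^2_{2,1}}t_\usp$ and $t_{m^1_{2,1}}t_{m^2_{2,1}}t_\usp^2$, contrary to your claim that only the first two degrees carry non-quadratic terms). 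This is not a harmless slip: if $t_{m^1_{2,2}}t_{m^2_{2,2}}t_\usp^5$ occurred in the fifth equation, the locus $\{t_{m^1_{3,1}}=t_{m^2_{3,1}}=0\}$ would no longer be a component, so a computation scoped to seven variables cannot by itself certify the stated component structure. That the final equations involve only seven variables is an \emph{output} of the nine-variable computation, not a permissible input; this is exactly why the paper chooses the $\ZZ$-grading $(3,4,5)$, injective on the nine interesting degrees, and computes in the degree range $[-7,8]$ covering all of them.

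One last remark, in your favour: your identification of the components $(1)$, $(2)$, $(3)$ with $\alpha$, $\gamma$, $\beta$ is the one forced by comparing free coordinate directions with the weight tables of Example~\ref{ex:quadrilateral_conjecture} (the $-m^1_{p,q}$ directions are free precisely on component $(3)$, matching $\widetilde{\seeds}(\beta)$). The paper's closing sentence lists the correspondence as $\alpha,\beta,\gamma$ with $\cM_1,\cM_2,\cM_3$, but it never fixes which $C_i$ each $\cM_j$ lies over, so there is no actual conflict; your assignment is the substantive one, and worth stating as you do.
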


\begin{proof}
	The proof of the equations of $\Defo V \into T^1_V$ is postponed to
	the next section and relies on some computer calculations performed
	with Macaulay2. We now assume to know these equations.
	
	The fact that $\Defo V$ is non-reduced and has $3$ irreducible
	components $C_1$, $C_2$, $C_3$ with the equations given above can
	be checked by taking the primary decomposition of the ideal of
	$\Defo V \into T^1_V$.  For each $i=1,2,3$, from the equations of
	$C_i$ it is obvious that $C_i$ is smooth. We need to prove that
	$C_i$ is a smoothing component, i.e.\ the general fibre over $C_i$
	is smooth.
	
	The component $C_1$ contains the $1$-parameter deformation
	constructed in Remark~\ref{rem:Minkowski_sum_gives_deform}. The
	singular locus of the general fibre of this deformation has $2$
	connected components with everywhere transverse
	$A_2$-singularities; therefore the general fibre of this
	deformation is smoothable.
	
	In order to prove that the general fibre over $C_2$ (resp.\ $C_3$)
	is smooth, we prove that the general fibre over the $2$-parameter
	deformation of $V$ with parameters $t_{m^2_{3,1}}$ and
	$t_{m^1_{2,2}}$ (resp.\ $t_{m^1_{3,1}}$ and $t_{m^2_{2,2}}$) is
	smooth. This can be done by applying the jacobian criterion to the
	output of the computer calculations that we will describe below.
\end{proof}

We now illustrate Conjecture~A and Conjecture~B in our example. Let $C_1$, $C_2$,
$C_3$ be the 3 irreducible components of $\Defo V$, whose equations
are given in Proposition~\ref{pro:eq}.
By Remark~\ref{rem:deformations_variety_or_pair} $\Defo (V, \partial V)$ has 3 irreducible components, $\cM_1$, $\cM_2$, $\cM_3$, each of which lie over exactly one of $C_1$, $C_2$, $C_3$.
For each $i \in \{1,2,3\}$ the smooth morphism $\cM_i \to C_i$ induces a surjective linear map $T_0 \cM_i \to T_0 C_i$ of linear representations of the torus $\Spec \CC[M]$.

Let $\alpha$, $\beta$ and $\gamma$ be the three $0$-mutable polynomials with Newton polytope $F$ (see Example~\ref{ex:quadrilateral}).
By comparing the degrees of $T_0 C_1$, $T_0 C_2$, $T_0 C_3$ with the seeds $\widetilde{\seeds}(\alpha)$, $\widetilde{\seeds}(\beta)$, $\widetilde{\seeds}(\gamma)$ in Example~\ref{ex:quadrilateral_conjecture}, we have that $\alpha$ (resp.\ $\beta$, resp.\ $\gamma$) corresponds  to $\cM_1$ (resp.\ $\cM_2$, resp.\ $\cM_3$).

\subsection{Computer computations} \label{sec:computer}
Here we present a proof of Proposition~\ref{pro:eq} which uses the software
Macaulay2 \cite{M2}, in particular the package \verb|VersalDeformations| \cite{ilten_package_article, stevens_computing}.

By observing the degrees of $T^1_V$ (Lemma~\ref{lem:t1}) and the
degrees of $T^2_V$ (Lemma~\ref{lem t2}) it is immediate to see that
each of the 5 equations of $\Defo V \into T^1_V$ can only involve the
following $9$ variables:
\[
t_u \quad
t_{m^1_{3,1}} \quad 
t_{m^1_{2,1}} \quad
t_{m^1_{3,2}} \quad
t_{m^1_{2,2}} \quad
t_{m^2_{3,1}} \quad
t_{m^2_{2,1}} \quad
t_{m^2_{3,2}} \quad
t_{m^2_{2,2}}.
\]
We call the corresponding $9$ degrees of $T^1_V$ the `interesting' degrees of $T^1_V$. This implies that there is a smooth morphism $\Defo V \to G$, where $G$ is a finite dimensional germ with embedding dimension $9$.
We now want to use the computer to determine $G$.

We consider the vector
\[
\begin{pmatrix}
3 \\ 4 \\ 5
\end{pmatrix} \in N = \ZZ^3.
\]
This gives a homomorphism $M \to \ZZ$ and a $\ZZ$-grading on on the
algebra $\CC [\sigma^\vee \cap M] = H^0(V, \cO_V)$, on $T^1_V$, and on
$T^2_V$. We have chosen this particular $\ZZ$-grading because the
corresponding linear projection is injective on the set
$\{u, m^1_{3,1}, m^1_{2,1}, m^1_{3,2}, m^1_{2,2}, m^2_{3,1},
m^2_{2,1}, m^2_{3,2}, m^2_{2,2} \}$,
which will allow us to identify our $9$ variables above with the
corresponding output of Macaulay2 below.  In the following tables we
write down the degrees in $\ZZ$ of the Hilbert basis of
$\sigma^\vee \cap M$, of the interesting degrees of $T^1_V$, and of
the degrees of $T^2_V$.
\[
\begin{array}{cccccccc}
s_1 & z_2 & s_4 & z_3 & s_3 & z_4 & s_2 & u \\ 
9 & 7 & 5 & 3 & 4 & 6 & 8 & 5
\end{array}
\]
\[
\begin{array}{ccccccccc}
-u & -m^1_{3,1} & -m^1_{2,1} & -m^1_{3,2} & -m^1_{2,2} & -m^2_{3,1} & -m^2_{2,1} & -m^2_{3,2} & -m^2_{2,2} \\
-5 & -6 & -1 & 3 & 8 & -7 & -2 & 1 & 6
\end{array}
\]
\[
\begin{array}{ccccc}
4u-s_1 & 4u-s_2 & 5u-s_1-s_2 & 6u-s_1 -s_2 & 9u-2s_1-2s_2 \\
-11 & -12 & -8 & -13 & -11
\end{array}
\]
One can  see that all non-interesting summands of $T^1_V$ have degree $\geq 9$.
Therefore we are interested in the summands of $T^1_V$ with degree between $-7$ and $8$.
Now we run the following Macaulay2 code, which was suggested to us by the referee.

\begin{verbatim}
S = QQ[s1,z2,s4,z3,s3,z4,s2,u,Degrees=>{9,7,5,3,4,6,8,5}];
M = matrix {{s1,z2,u,s2,z4},{z2,s4,z3,z4,s3}};
I = minors(2,M) + ideal (s4*s3-z3^3,z2*s3-z3^2*u,z2*z4-z3*u^2,s1*z4-u^3);
needsPackage "VersalDeformations"
T1 = cotangentCohomology1(-7,8,I)
T2 = cotangentCohomology2(I)
(F,R,G,C) = versalDeformation(gens(I),T1,T2);
G
\end{verbatim}

The output \verb|T1| describes how the equations of $V \into \AA^8$ are perturbed, at the first order, by the coordinates $t_1, \dots, t_9$ of the interesting part of $T^1_V$. From these perturbations one can compute the degrees of these coordinates and discover the following conversion table between our notation and the output of Macaulay2.
\[
\begin{array}{ccccccccc}
t_1 & t_2 & t_3 & t_4 & t_5 & t_6 & t_7 & t_8 & t_9 \\
t_{m^2_{3,1}} & t_{m^2_{3,2}} & t_{m^2_{2,2}} & t_{m^2_{2,1}} & t_{m^1_{3,1}} & t_{m^1_{3,2}} & t_{m^1_{2,2}} & t_{m^1_{2,1}} & t_u
\end{array}\]

The output \verb|G| describes the miniversal deformation space of $V$ with degrees between $-7$ and $8$, i.e.\ the germ $G$ we wanted to study. This implies that $G$ is the germ at the origin of the closed subscheme of $\AA^9$  defined by the following equations:
\begin{gather*}
	t_5 t_9 = 0, \\
	t_1 t_9 = 0, \\
	t_4t_5+t_1t_8 = 0, \\
	t_1 t_5 = 0, \\
	t_2 t_5^2 - t_1^2 t_6 = 0.
\end{gather*}
These equations are those in Proposition~\ref{pro:eq}.
The output \verb|F| gives the equations of the deformation of $V$ over the germ $G$.

\begin{rem}
	The equations of the germ $G$ are only well defined up to a homogeneous change of coordinates whose jacobian is the identity.
	In particular, the quadratic terms of these equations are well defined and can be computed by analysing the cup product $T^1_V \otimes T^1_V \to T^2_V$: this can be done via toric methods \cite{fil, fil2}.
\end{rem}

\bibliography{bib_cofipe}

\end{document}